\documentclass[11pt,a4paper,twoside]{article}
\usepackage{amsthm, amsfonts,amsmath, color}

\topmargin=-14 true mm
\oddsidemargin=-4 true mm
\evensidemargin=-4 true mm
\setlength{\textheight}{248 true mm}  %total+the header
\setlength{\textwidth}{172 true mm}

\newtheorem{theorem}{Theorem}%[section]
\newtheorem{corollary}{Corollary}%[section]
\newtheorem{definition}{Definition}%[section]
\newtheorem{example}{Example}%[section]
\newtheorem{lemma}{Lemma}%[section]
\newtheorem{proposition}{Proposition}%[section]
%[section]

\def\Ric{\operatorname{Ric}}
\def\tr{{\rm\,trace\,}}

%\def\vol{\operatorname{vol}}

%Almost
\title{$\ast$-$\eta$-Ricci solitons and Einstein metrics \\ on a weak $\beta$-Kenmotsu manifold}
\author{Vladimir Rovenski
\footnote{Department of Mathematics, University of Haifa, Mount Carmel, 3498838 Haifa, Israel
\newline e-mail: {\tt vrovenski@univ.haifa.ac.il}
%\\
%WWW home page: \texttt{https://math.haifa.ac.il/ROVENSKI/rovenski.html}
}}

\date{}

\begin{document}

\maketitle

\begin{abstract}
Weak almost contact metric manifolds (i.e., the complex structure is replaced by a nonsingular skew-symmetric tensor), defined by the author and R.\,Wolak, allow a new look at the classical theory and find novel applications.
An important case of these manifolds, which is locally a twisted product, is a weak $\beta$-Kenmotsu manifold defined by the author and D.S.\,Patra.
In the paper, the concept of the $\ast$-Ricci tensor is adapted to weak almost contact manifolds,
the interaction of the $\ast$-$\eta$-Ricci soliton with the weak $\beta$-Kenmotsu structure (with $\beta=const$) is studied and new characteristics of Einstein metrics are~obtained.

\vskip1mm\noindent
\textbf{Keywords}:
weak almost contact structure,
warped product,
weak $\beta$-Kenmotsu manifold,
$\ast$-Ricci tensor,
$\ast$-$\eta$-Ricci soliton,
Einstein metric

\vskip1mm\noindent
\textbf{Mathematics Subject Classifications (2010)} Primary 53C12; Secondary 53C21
\end{abstract}

\section{Introduction}

Solutions of some non-linear  partial differential equations decompose at large time $t$ into solitary waves running with constant speed -- the so-called solitons.
Ricci solitons (\textbf{RS}), being self-similar solutions of the Ricci flow $\partial g/\partial t = -2\Ric_g$ (a parabolic partial differential equation),
generalize Einstein metrics $g$, i.e., solutions of $\Ric = \lambda\,g$, where $\lambda$ is a real constant, see \cite{CLN-2006}.
A~Riemannian~metric $g$ on a smooth manifold $M$ is called a RS if there exists a smooth vector field $V$, called soliton vector field, and a scalar $\lambda$, called soliton constant (some authors assume $\lambda$ to be a smooth function on $M$), satisfying
\begin{align}\label{E-sol-basic}
 (1/2)\pounds_V\,g + \Ric = \lambda\,g,
\end{align}
where $\pounds$ is the Lie derivative. A RS is {expanding}, {steady}, or {shrinking} if $\,\lambda$ is negative, zero, or positive, respectively.
If $\pounds_V\,g=0$, that is, $V$ is zero or a Killing vector field, then the RS reduces to Einstein metric.
The study of RS is also motivated by the fact that some compact manifolds don't admit Einstein metrics.
Some authors study the generalization of \eqref{E-sol-basic} to $\eta$-RS (where $\eta$ is a 1-form) defined in \cite{cho2009ricci}~by
\begin{align}\label{E-sol-eta}
 (1/2)\pounds_V\,g+\Ric=\lambda\,g +\mu\,\eta\otimes\eta,
\end{align}
where $\mu$ is a smooth function on $M$. If $\pounds_V\,g=0$, then \eqref{E-sol-eta} gives $\eta$-Einstein metrics
\begin{align}\label{E-eta-Einstein}
 \Ric=\lambda\,g +\mu\,\eta\otimes\eta.
\end{align}

An interesting question is how Ricci-type solitons interact with various geometric structures, e.g. contact structure or foliation, on a manifold.
When does an almost contact metric manifold equipped with a Ricci-type soliton carry canonical (e.g. Einstein-type) metrics?

Contact Riemannian geometry is of growing interest due to its important role in physics, e.g.,~\cite{blair2010riemannian}.
An~{almost contact structure on a Riemannian manifold $(M^{2n+1},g)$ is a set $(f,\xi,\eta)$, where
${f}$ is a $(1,1)$-tensor of rank $2\,n$, $\xi$ is a unit vector field, ${\eta}$ is a 1-form dual to $\xi$, satisfying the equality
$g({f} X,{f} Y)= g(X,Y) - {\eta}(X)\,{\eta}(Y)$,
In 1969, S.\,Tanno classified connected almost contact metric mani\-folds with automorphism groups of maximal dimensions, as follows:
(i)~homogeneous normal contact Riemannian manifolds with constant $f$-holomorphic sectional curvature if the $\xi$-sectional curvature $K(\xi, X) > 0$;
(ii)~warped products of the real line and a K\"{a}hler manifold, if $K(\xi, X)>0$;
(iii)~metric products if $K(\xi, X)=0$.
Warped products are also important for  theoretical physics: some of Einstein metrics and spacetime models are of this~type.
In \cite{olszak1991normal}, Z.~Olszak cha\-racterized a class (ii), known as $\beta$-Kenmotsu manifolds ($\beta$-\textbf{KM}) for $\beta=const\ne0$
(defined by K.~Kenmotsu in \cite{kenmotsu1972class}, when $\beta=1$), by the~equality
\begin{equation}\label{2.3-patraA}
 (\nabla_{X} f)Y=\beta\{g(f X, Y)\,\xi -\eta(Y)\,f X\} .
\end{equation}
If $\beta\ne0$ is a smooth function on the manifold, then \eqref{2.3-patraA} gives a twisted product, see Section~\ref{sec:02} and \cite{rov-119}.

\smallskip

The study of RS and their analogues on almost contact metric manifolds,
particularly, on almost Sasakian, almost cosymplectic and almost KM, has gained great popularity,
e.g. \cite{Gosh-Patra2018,kenmotsu1972class,Li-2022,Mondal-2024,Patra-2024,ven-2019}.
There has been interest in the so-called $\ast$-RS, which is defined by
\begin{align*}
 (1/2)\,\pounds_V\,g + \Ric^\ast = \lambda\,g .
\end{align*}
The $\ast$-Ricci tensor (defined by S.~Tashibana \cite{Tash-1959} on almost Hermitian manifolds and then applied by T.~Hamada \cite{Hamada-2002} to almost contact metric manifolds) is given by
\begin{align}\label{E-ast-Ricci-tensor}
 \Ric^\ast(X,Y) = (1/2)\tr\{Z\to R_{X, fY} fZ\} ,
\end{align}
where
$R_{{X},{Y}}Z=\nabla_X\nabla_Y Z-\nabla_Y\nabla_X Z-\nabla_{[X,Y]} Z$ is the curvature tensor.

Inspired by the works about $\ast$-RS, first S.~Dey and S.~Roy \cite{DeyRoy-2020} for $V=\xi$,
and then S.~Dey, S.~Sarkar and A.~Bhattacharyya \cite{DSB-2024} for arbitrary $V$, defined $\ast$-$\eta$-RS on almost contact metric manifolds by replacing the Ricci tensor in the equation \eqref{E-sol-eta} with the $\ast$-Ricci~tensor:
\begin{align}\label{E-sol-ast-eta0}
 (1/2)\,\pounds_V\,g+\Ric^\ast=\lambda\,g +\mu\,\eta\otimes\eta.
\end{align}
When $\mu=0$, \eqref{E-sol-ast-eta0} reduces to the $\ast$-RS equation.
In \cite{DSB-2024,ven-2022}, $\ast$-$\eta$-RS on KM $M^{2n+1}(f,\xi,\eta,g)$ were studied, and conditions when $g$ is %$\eta$-Einstein or even
an Einstein metric were found.

\smallskip

In  \cite{RWo-2,rov-119}, we defined metric structures on a smooth manifold
that generalize Hermitian, in parti\-cular K\"{a}hler, structure, as well as the almost contact metric structure and its important classes: Sasakian, cosymplectic, Kenmotsu, etc. structures.
These so-called ``weak" metric structures (the complex structure
is replaced by a nonsingular skew-symmetric tensor,
see Definition~\ref{D-basic}) allow a fresh look at the classical theory and find novel applications.
Weak contact metric manifolds form a broad class; for example, a twisted product $\mathbb{R}\times_\sigma\bar M$
of a real line and a weak K\"{a}hler manifold (Definition~\ref{D-wK}) is a weak $\beta$-KM (Definition~\ref{D-wK2}) with $\beta=(\partial_t\sigma)/\sigma$;
when $\beta=0$, and the metric product of $\mathbb{R}$ and a weak K\"{a}hler manifold is a weak cosymplectic~manifold.

In \cite{rst-55,rov-126}, we studied the interaction of Ricci-type solitons with weak (K-contact and $\beta$-Kenmotsu) metric structures, and proved that under certain conditions we get $\eta$-Einstein or even Einstein~metrics.

In this paper, we adapt the concept of the $\ast$-Ricci tensor to the weak almost contact metric structure and study
the question: when does a weak $\beta$-KM equipped with a $\ast$-$\eta$-RS (in particu\-lar, $\ast$-RS) carry an Einstein metric.
%%%
Sections~\ref{sec:01} and \ref{sec:02} contain preliminary material on weak almost contact metric structure and its distinguished class of weak $\beta$-KM.
Theorem~\ref{T-2.1} and Example~\ref{Ex-warped} show (using weak Hermitian and weak K\"{a}hler structures) that there are many purely weak $\beta$-KM.
%%%
Section~\ref{sec:03} introduces the $\ast$-Ricci tensor and $\ast$-scalar curvature and establishes their relation with the classical tensors (see Theorem~\ref{T-1-ast}).
We~also introduce  $\ast$-$\eta$-RS and give Example~\ref{Ex-ast-RS}.
%%%
Section~\ref{sec:04} contains our results on the interaction of $\ast$-$\eta$-RS, in particular, $\ast$-RS, with the $\beta$-KM when $\beta$ is constant.
Proposition~\ref{T-lambda0} derives the sum of the soliton constants, $\lambda+\mu$, under certain conditions.
Theorems~\ref{T-002a}--\ref{T-005} generalize the results of other authors and show that for a weak $\beta$-KM equipped with a $\ast$-$\eta$-RS, under certain conditions (the potential vector field $V$ is a contact vector field; $g$ is an $\eta$-Einstein metric; $V$ is collinear with $\xi$; and the soliton is a gradient almost $\ast$-$\eta$-soliton) $g$ is an Einstein metric.

\section{Preliminaries}
\label{sec:01}

In this section, we recall necessary definitions and results, see \cite{rst-55,RWo-2,rov-126}.

\begin{definition}\label{D-basic}
\rm
A~\textit{weak almost contact metric structure} on a smooth manifold $M^{2n+1}$ is a set $(f,Q,\xi,\eta,g)$ where
${f}$ is a $(1,1)$-tensor of rank $2\,n$, $Q$ is a nonsingular $(1,1)$-tensor,
$\xi$ is characteristic vector field, ${\eta}$ is a 1-form, $g$ is a Riemannian metric on $M$, satisfying
\begin{align}\label{2.1}
 & {f}^2 = -Q + {\eta}\otimes {\xi},\qquad {\eta}({\xi})=1,\qquad Q\,{\xi} = {\xi}, \\
\label{2.2}
 & g({f} X,{f} Y)= g(X,Q\,Y) - {\eta}(X)\,{\eta}(Y)\quad (X,Y\in\mathfrak{X}_M),
\end{align}
and $M^{2n+1}({f},Q,{\xi},{\eta},g)$ is called a \textit{weak almost contact metric manifold}.
\end{definition}

We assume that a smooth $2n$-dimensional distribution ${\cal D}:=\ker\eta$ is ${f}$-invariant, i.e., ${f} X\in{\cal D}\ (X\in{\cal D})$,
as in the classical theory \cite{blair2010riemannian}, where $Q={\rm id}_{\,TM}$.
By the above, the distribution ${\cal D}$ is $Q$-invariant: $Q({\cal D})={\cal D}$.
Setting $Y=\xi$ in \eqref{2.2}, we obtain $\eta(X)=g(X,\xi)$.

\begin{proposition}%\label{P-6}
 For a weak almost contact metric structure $({f},\xi,\eta,Q,g)$, the tensor ${f}$ is skew-symmetric, $Q$ is self-adjoint and the following is true:
\[
 {f}\,\xi=0,\quad \eta\circ{f}=0,\quad [Q,\,{f}]=0,\quad \eta\circ Q=\eta.
\]
\end{proposition}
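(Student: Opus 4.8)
The plan is to prove the four identities directly from the defining equations \eqref{2.1}--\eqref{2.2}, establishing skew-symmetry of $f$ and self-adjointness of $Q$ first, since the remaining claims will follow cheaply from these and from \eqref{2.1}.

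First I would show $f\xi=0$ and $\eta\circ f=0$. Apply $f$ to the first relation in \eqref{2.1} and use $Q\xi=\xi$, $\eta(\xi)=1$: one gets $f^3 = -fQ + (\eta\otimes f\xi)$; alternatively, a cleaner route is to contract \eqref{2.2} with $Y=\xi$, using $\eta(X)=g(X,\xi)$ (derived in the text) and $Q\xi=\xi$, to get $g(fX,f\xi)=g(X,\xi)-\eta(X)\cdot 1 = 0$ for all $X$. Since $f$ has rank $2n$ and $\ker f \supseteq \{\xi\}$ would need justification, I instead argue: from \eqref{2.1}, $f^2\xi = -Q\xi + \eta(\xi)\xi = -\xi+\xi = 0$, so $f\xi \in \ker f$; feeding $X=\xi$ into \eqref{2.2} gives $g(f\xi,f\xi) = g(\xi,Q\xi) - 1 = 0$, hence $f\xi = 0$. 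Then $\eta(fX) = g(fX,\xi)$, and using \eqref{2.2} with $Y=\xi$ again (now knowing $f\xi=0$) or using skew-symmetry of $f$ once established, $g(fX,\xi) = -g(X,f\xi) = 0$, so $\eta\circ f = 0$.

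For skew-symmetry of $f$ and self-adjointness of $Q$: by \eqref{2.2}, $g(fX,fY) = g(X,QY)-\eta(X)\eta(Y)$ is symmetric in $X,Y$ on the left only if $g(X,QY)-\eta(X)\eta(Y)$ is symmetric, which forces $g(X,QY)=g(QX,Y)$, i.e., $Q$ is self-adjoint. For $f$ skew-symmetric, replace $Y$ by $fY$ in \eqref{2.2}: $g(fX, f^2Y) = g(X, QfY) - \eta(X)\eta(fY)$. Using $f^2 = -Q+\eta\otimes\xi$ and $\eta\circ f = 0$, the left side becomes $g(fX, -QY + \eta(Y)\xi) = -g(fX,QY) + \eta(Y)g(fX,\xi) = -g(fX,QY)$. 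So $g(fX,QY) = -g(X,QfY) + 0 = -g(Qf Y, X)$. Since $Q$ is invertible and self-adjoint, and $Q$ commutes with $f$ (to be shown), one deduces $g(fX,Y) = -g(X,fY)$; I would organize this carefully, perhaps first proving $[Q,f]=0$ to avoid circularity. Indeed $[Q,f]=0$ follows by applying $f$ to $f^2 = -Q+\eta\otimes\xi$ on both sides: $f^3 = -fQ + f\xi\,\eta\circ(\cdot)$... more directly, $f\cdot f^2 = f^2\cdot f$ gives $f(-Q+\eta\otimes\xi) = (-Q+\eta\otimes\xi)f$, i.e., $-fQ + (\eta\circ\,\cdot\,)\, f\xi = -Qf + \eta(f\,\cdot)\xi$, and since $f\xi=0$ and $\eta\circ f = 0$ both sides reduce to $-fQ = -Qf$, hence $[Q,f]=0$. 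Finally $\eta\circ Q = \eta$: for any $X$, $\eta(QX) = g(QX,\xi) = g(X,Q\xi) = g(X,\xi) = \eta(X)$, using self-adjointness of $Q$ and $Q\xi=\xi$.

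The main obstacle, such as it is, is arranging the logical order so the argument is not circular: self-adjointness of $Q$ comes straight from symmetry considerations in \eqref{2.2}; then $f\xi=0$ and $\eta\circ f=0$ from plugging $\xi$ into \eqref{2.1}--\eqref{2.2}; then $[Q,f]=0$ from $f\cdot f^2 = f^2\cdot f$; and only then skew-symmetry of $f$, using the $Y\mapsto fY$ substitution in \eqref{2.2} together with $[Q,f]=0$ and invertibility of $Q$; lastly $\eta\circ Q = \eta$. Each step is a short computation, so no step is genuinely hard, but the non-degeneracy of $Q$ is what replaces the classical identity $Q=\mathrm{id}$ and must be invoked at the right moment.
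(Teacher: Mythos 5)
The paper itself states this proposition without proof (it is quoted from the cited references), so I can only assess your argument on its own terms. Most of the steps are sound once their inputs are available: self-adjointness of $Q$ by symmetrizing \eqref{2.2}, $[Q,f]=0$ from $f\circ f^2=f^2\circ f$, skew-symmetry via the substitution $Y\mapsto fY$ in \eqref{2.2}, and $\eta\circ Q=\eta$ at the end. The genuine gap is exactly the ordering problem you flag but do not resolve: $\eta\circ f=0$ is never actually established before you consume it. Your two offered routes are (a) ``\eqref{2.2} with $Y=\xi$, now knowing $f\xi=0$'', which yields only $\eta(X)=g(X,\xi)$ and says nothing about $\eta(fX)$; and (b) skew-symmetry of $f$ --- but your derivations of both $[Q,f]=0$ and skew-symmetry take $\eta\circ f=0$ as input, so route (b) is circular. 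The missing ingredient is the standing hypothesis stated immediately after Definition~\ref{D-basic}: ${\cal D}=\ker\eta$ is $f$-invariant. Since $TM=\mathbb{R}\xi\oplus{\cal D}$ and $f\xi=0$, this gives $fX=f(X-\eta(X)\xi)\in{\cal D}=\ker\eta$, i.e.\ $\eta\circ f=0$, and then your chain closes in the order you propose. (Alternatively one can reach skew-symmetry without $\eta\circ f=0$: from \eqref{2.2} and $\eta=g(\cdot,\xi)$ the adjoint satisfies $f^\ast f=-f^2$, so the symmetric operator $f+f^\ast$ annihilates the $2n$-dimensional subspace $\operatorname{im}f$; its image therefore lies in the line $(\operatorname{im}f)^\perp$, which it also annihilates because $g((f+f^\ast)u,u)=2\,g(f^\ast u,u)=0$ for $u\perp\operatorname{im}f$; hence $f^\ast=-f$, and $\eta(fX)=g(fX,\xi)=-g(X,f\xi)=0$ follows.)

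A second, smaller circularity sits in your proof of $f\xi=0$: you evaluate $g(\xi,Q\xi)=g(\xi,\xi)=1$, but $g(\xi,\xi)=1$ is not part of Definition~\ref{D-basic}; it is a consequence of $\eta=g(\cdot,\xi)$, which the text extracts from \eqref{2.2} with $Y=\xi$ only \emph{after} $f\xi=0$ is known. The clean fix is purely algebraic and metric-free: from \eqref{2.1}, $f^2\xi=-Q\xi+\xi=0$; applying $f^2=-Q+\eta\otimes\xi$ to $v:=f\xi$ and using $f^2v=f(f^2\xi)=0$ gives $Qv=\eta(v)\,\xi$, hence $v=\eta(v)\,Q^{-1}\xi=\eta(v)\,\xi$; then $0=fv=\eta(v)\,v$ forces $v=0$ in either case $\eta(v)=0$ or $\eta(v)\neq0$. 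With $f\xi=0$ and $\eta\circ f=0$ repaired in this way, the remainder of your argument is correct.
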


Recall the following formulas (with $X,Y\in\mathfrak{X}_M$):
\begin{align}\label{3.3B}
 (\pounds_{X}\,\eta)(Y) & = X(\eta(Y)) - \eta([X, Y]) ,\\
 \label{3.3C}
 (\pounds_{Z}\,g)(X,Y) & = g(\nabla_X Z, Y) + g(X, \nabla_Y Z).
\end{align}

A weak almost contact metric structure $({f},Q,\xi,\eta, g)$ on a manifold $M$ is said to be {\it normal} if
the following tensor is identically zero:
\begin{align*}%\label{2.6X}
 N^{(1)}(X,Y) := [{f},{f}](X,Y) + 2\,d\eta(X,Y)\,\xi,\quad X,Y\in\mathfrak{X}_M,
\end{align*}
where the exterior derivative $d\eta$ of $\eta$ and
the Nijenhuis torsion $[{f},{f}]$ of ${f}$
are given~by
\begin{align*}%\label{3.3A}
 & d\eta(X,Y) = (1/2)\,\{X(\eta(Y)) - Y(\eta(X)) - \eta([X,Y])\},\quad X,Y\in\mathfrak{X}_M, \\
%\label{2.5}
 & [{f},{f}](X,Y) := {f}^2 [X,Y] + [{f} X, {f} Y] - {f}[{f} X,Y] - {f}[X,{f} Y],\quad X,Y\in\mathfrak{X}_M.
\end{align*}
%\begin{remark}\rm
%Recall that the Levi-Civita connection $\nabla$ of a %Riemannian metric $g$ is given by
%\begin{align*}%\label{3.2}
%2\,g(\nabla_{X}Y,Z) &= X\,g(Y,Z) + Y\,g(X,Z) - Z\,g(X,Y) %\notag\\
%&+ g([X,Y],Z) +g([Z,X],Y) - g([Y,Z],X),
%\end{align*}	
%For a weak almost contact metric structure on $M$, we get the following:
%\begin{align*}
%\label{2.1B}
% \eta([\xi,{f} X])=g(\nabla_{\xi}\,({f} X) - \nabla_{{f} X}\,\xi,\xi)=g(\nabla_{\xi}\,({f} X),\xi)
% =g({f} X,\nabla_{\xi}\,\xi),\quad\forall\,X\in \mathfrak{X}_M.
%\end{align*}
%and has the properties $[X,Y]=\nabla_X\,Y-\nabla_Y\,X$ and %$X\,g(Y,Z)=g(\nabla_X\,Y,Z)+g(Y,\nabla_X\,Z)$.
%Using the Levi-Civita connection $\nabla$ of a Riemannian %metric $g$, \eqref{2.5} can be written as
%\begin{align}\label{4.NN}
% [{f},{f}](X,Y) = ({f}\nabla_Y\,{f} - \nabla_{{f} Y}\,{f}) X
%- ({f}\nabla_X\,{f} - \nabla_{{f} X}\,{f}) Y.
%\end{align}
%\begin{align}\label{4.NNxi}
% [{f},{f}](X,\xi)= {f}(\nabla_{\xi}\,{f})X %+\nabla_{{f} X}\,\xi -{f}\,\nabla_{X}\,\xi \quad
%(X\in \mathfrak{X}_M),
%\end{align}
%\end{remark}
%A weak contact metric structure, whose structure vector field %${\xi}$ is Killing,~i.e.,
%\begin{eqnarray*}%\label{E-nabla}
%%\nonumber
% (\pounds_{{\xi}}\,g)(X,Y)
% %:= {\xi}(g(X,Y)) -g([{\xi},X],Y) - g(X,[{\xi},Y]) \\
%  = g(\nabla_Y {\xi}, X) + g(\nabla_X {\xi}, Y) = 0 ,
%\end{eqnarray*}
%is called a \textit{weak {\rm K}-contact structure}.
%\begin{definition}[\cite{RWo-2}]\rm
%A normal weak contact metric manifold
%is called a {\it weak Sasakian manifold}.
%\end{definition}
%%%%%%%%%%%%%%%%%%%%%%%%%%%
The~following three tensors $N^{(2)}, N^{(3)}$ and $N^{(4)}$ are well known, see \cite{blair2010riemannian}:
\begin{align*}
%\label{2.7X}
 N^{(2)}(X,Y) &:= (\pounds_{{f} X}\,\eta)(Y) - (\pounds_{{f} Y}\,\eta)(X) = 2\,d\eta({f} X, Y) - 2\,d\eta({f} Y, X), \\
%\label{2.8X}
 N^{(3)}(X) &:= (\pounds_{\xi}\,{f})(X)
 %\overset{\eqref{3.3B}}
 = [\xi, {f} X] - {f} [\xi, X],\\
%\label{2.9X}
 N^{(4)}(X) &:= (\pounds_{\xi}\,\eta)(X)
 %\overset{\eqref{3.3C}}
 = \xi(\eta(X)) - \eta([\xi, X]) = 2\,d\eta(\xi, X).
\end{align*}

\begin{proposition}[see \cite{rov-119}]
The normality condition ${\cal N}^{\,(1)}=0$ for a weak almost contact metric structure implies
${\cal N}^{\,(3)} = {\cal N}^{\,(4)} = 0$, ${\cal N}^{\,(2)}(X,Y) = \eta([\widetilde QX, fY])$ and
 $[X,\xi]\in{\cal D}$ for all $X\in{\cal D}$.
Moreover, $\nabla_{\xi}\,\xi=0$, that is, ${\cal D}^\bot$ defines a geodesic foliation.
\end{proposition}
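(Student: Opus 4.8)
\textit{Proof proposal.} The strategy is to squeeze everything out of the single hypothesis $\mathcal{N}^{(1)}=0$ by feeding it special arguments and by composing it with $\eta$, relying on the algebraic identities from the preceding proposition ($f\xi=0$, $\eta\circ f=0$, $\eta\circ Q=\eta$, $[Q,f]=0$, $Q$ self-adjoint) together with \eqref{2.1}--\eqref{2.2}. First I would compose $\mathcal{N}^{(1)}$ with $\eta$: the terms $f[fX,Y]$ and $f[X,fY]$ die because $\eta\circ f=0$, while $f^2[X,Y]=-Q[X,Y]+\eta([X,Y])\,\xi$ together with $\eta\circ Q=\eta$ gives $\eta(f^2[X,Y])=0$, so $0=\eta(\mathcal{N}^{(1)}(X,Y))=\eta([fX,fY])+2\,d\eta(X,Y)$; this scalar identity is the workhorse. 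Next I would evaluate $\mathcal{N}^{(1)}$ with one slot equal to $\xi$: using $f\xi=0$ one checks $[f,f](X,\xi)=f^2[X,\xi]-f[fX,\xi]=f\,\mathcal{N}^{(3)}(X)$ and $2\,d\eta(X,\xi)=-\mathcal{N}^{(4)}(X)$, hence $\mathcal{N}^{(1)}(X,\xi)=f\,\mathcal{N}^{(3)}(X)-\mathcal{N}^{(4)}(X)\,\xi=0$. Applying $\eta$ and using $\eta\circ f=0$, $\eta(\xi)=1$ yields $\mathcal{N}^{(4)}=0$, and then $f\,\mathcal{N}^{(3)}=0$.

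To upgrade $f\,\mathcal{N}^{(3)}=0$ to $\mathcal{N}^{(3)}=0$ I would show $\mathcal{N}^{(3)}(X)\in\mathcal{D}$: indeed $\eta(\mathcal{N}^{(3)}(X))=-\eta([fX,\xi])=-\mathcal{N}^{(4)}(fX)=0$; since $f^2=-Q$ on $\mathcal{D}$ and $Q$ is nonsingular, $f$ is injective on $\mathcal{D}$, so $\mathcal{N}^{(3)}(X)=0$. The statement $\mathcal{N}^{(4)}=(\pounds_\xi\eta)=0$ also immediately gives $\xi(\eta(X))=\eta([\xi,X])$ for all $X$, whence $\eta(X)=0$ implies $\eta([X,\xi])=0$, i.e.\ $[X,\xi]\in\mathcal{D}$ for $X\in\mathcal{D}$. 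For the formula for $\mathcal{N}^{(2)}$ I would go back to $\eta([fX,fY])=-2\,d\eta(X,Y)$ and replace $Y$ by $fY$, using $f(fY)=f^2Y=-QY+\eta(Y)\,\xi$ and $\eta([fX,\xi])=\mathcal{N}^{(4)}(fX)=0$ to rewrite the left-hand side as $-\eta([fX,QY])+(fX)(\eta(Y))$. Comparing this with $2\,d\eta(fX,Y)=(fX)(\eta(Y))-\eta([fX,Y])$ and with $d\eta(X,fY)=-d\eta(fY,X)$, the definition $\mathcal{N}^{(2)}(X,Y)=2\,d\eta(fX,Y)-2\,d\eta(fY,X)$ collapses to $\mathcal{N}^{(2)}(X,Y)=\eta([fX,(Q-\operatorname{id})Y])$; swapping $X$ and $Y$ and using skew-symmetry of $\mathcal{N}^{(2)}$ then gives exactly $\mathcal{N}^{(2)}(X,Y)=\eta([\widetilde QX,fY])$.

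Finally, for $\nabla_\xi\xi=0$ I would apply the Koszul formula with both inputs equal to $\xi$: for every $Z$, $2g(\nabla_\xi\xi,Z)=2\,\xi(\eta(Z))-Z(g(\xi,\xi))-2\,\eta([\xi,Z])=2\big(\xi(\eta(Z))-\eta([\xi,Z])\big)=2\,(\pounds_\xi\eta)(Z)=2\,\mathcal{N}^{(4)}(Z)=0$, so $\nabla_\xi\xi=0$ and the integral curves of the unit field $\xi$ spanning $\mathcal{D}^{\bot}$ are geodesics, i.e.\ $\mathcal{D}^{\bot}$ is a geodesic foliation. The only genuinely delicate point is the $\mathcal{N}^{(2)}$ identity: one must track the correction terms involving $\widetilde Q=Q-\operatorname{id}$ carefully so that the classical conclusion $\mathcal{N}^{(2)}=0$ is recovered when $Q=\operatorname{id}$; the remaining steps are a straightforward unwinding of the definitions and of the algebraic relations already recorded.
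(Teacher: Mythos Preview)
Your argument is correct. The paper itself does not supply a proof of this proposition but only cites \cite{rov-119}, so there is nothing to compare against beyond noting that your approach is the natural adaptation of the classical Blair-type argument to the weak setting: specialize $\mathcal{N}^{(1)}$ at $\xi$, compose with $\eta$, and use that $f$ is injective on $\mathcal{D}$ because $f^2=-Q$ there with $Q$ nonsingular. Each step checks out; in particular, your handling of the $\mathcal{N}^{(2)}$ identity---replacing $Y$ by $fY$ in $\eta([fX,fY])=-2\,d\eta(X,Y)$ to obtain $\mathcal{N}^{(2)}(X,Y)=\eta([fX,\widetilde QY])$ and then invoking skew-symmetry to flip to $\eta([\widetilde QX,fY])$---is precisely where the correction term $\widetilde Q=Q-\mathrm{id}$ enters, and it correctly degenerates to $\mathcal{N}^{(2)}=0$ when $Q=\mathrm{id}$. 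The Koszul computation for $\nabla_\xi\xi=0$ is likewise clean.
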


Define a (1,1)-tensor $\widetilde Q := Q - {\rm id}$
and note that $[\widetilde{Q},{f}]=0$.
We also obtain ${f}^3+{f} = -\widetilde{Q}\,{f}$.

%The following theorem generalizes
%\cite[Theorem~6.1]{blair2010riemannian},
%i.e., $Q={\rm id}_{\, TM}$.

%\begin{theorem}\label{thm6.1}
%For a weak almost contact metric structure %$({f},\xi,\eta,g)$, the vanishing of $N^{(1)}$ implies %that $N^{(3)}$ and $N^{(4)}$ vanish and
%\begin{align*}
%%\label{3.1KK}
% N^{(2)}(X,Y)=\eta([\tilde QX,\,{f} Y]) %+\eta(X)\,\eta([{f} Y, \tilde Q\,\xi]).
% \end{align*}
%\end{theorem}

\section{Geometry of weak $\beta$-KM}
\label{sec:02}

In this section, we recall some well-known results, see \cite{rst-55,rov-126}.

\begin{definition}\label{D-wK}\rm
A weak almost contact metric manifold $M^{2n+1}(f,Q,\xi,\eta,g)$ is called a \textit{weak $\beta$-KM}
(a \textit{weak KM} when $\beta\equiv1$)~if \eqref{2.3-patraA} is valid:
\begin{equation}\label{2.3-patra}
 (\nabla_{X} f)Y=\beta\{g(f X, Y)\,\xi -\eta(Y)\,f X\}\quad (X,Y\in\mathfrak{X}_M),
\end{equation}
where $\beta$ is a non-zero smooth function on $M$; and if $n=1$, then we also require $d\beta\wedge\eta=0$.
\end{definition}

Taking $X=\xi$ in \eqref{2.3-patra} and using ${f}\,\xi=0$, we get $\nabla_{\xi}\,f=0$.

\begin{lemma}
The following formulas are true for a weak $\beta$-KM $M^{2n+1}(f,Q,\xi,\eta,g)$ and $X,Y\in\mathfrak{X}_M$:
\begin{align}\label{2.3b}
 & \nabla_{X}\,\xi = \beta\{X -\eta(X)\,\xi\}, \\
\label{2.3c}
 & (\nabla_{X}\,\eta)(Y) = \beta\{g(X,Y) -\eta(X)\,\eta(Y)\} ,\\
 \label{E-nS-10b}
 & (\nabla_X\,Q)Y = -\beta\big\{\eta(Y)\,\widetilde Q X + g(\widetilde Q X, Y)\,\xi\big\} .
\end{align}
In particular, $\nabla_\xi\,Q=0$ and $\tr Q=const$.
\end{lemma}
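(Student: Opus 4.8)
The plan is to derive the three covariant-derivative formulas directly from the defining identity \eqref{2.3-patra}, working in increasing order of complexity: first \eqref{2.3b}, then \eqref{2.3c}, then \eqref{E-nS-10b}, and finally extract the two consequences $\nabla_\xi Q=0$ and $\tr Q=const$. The only tools needed are the algebraic relations in Definition~\ref{D-basic} and the preceding Proposition (skew-symmetry of $f$, self-adjointness of $Q$, $f\xi=0$, $\eta\circ f=0$, $[Q,f]=0$, $\eta\circ Q=\eta$, $\widetilde Q=Q-\mathrm{id}$, and $f^3+f=-\widetilde Qf$), together with the metric compatibility $\nabla g=0$.

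For \eqref{2.3b}: I would contract \eqref{2.3-patra} with $f$ in a suitable slot. Concretely, apply $f$ to nothing but instead compute $(\nabla_X f)(fY)$ from \eqref{2.3-patra} and separately expand $\nabla_X(f(fY)) - f(\nabla_X(fY))$ using \eqref{2.1} in the form $f^2=-Q+\eta\otimes\xi$; comparing the two expressions isolates $\nabla_X\xi$ in terms of $X$, $\eta(X)\xi$, and $Q$-terms, and the algebraic identities collapse the $Q$-terms. A cleaner route: since $\eta(Y)=g(Y,\xi)$, differentiate this to get $(\nabla_X\eta)(Y)=g(Y,\nabla_X\xi)$, so \eqref{2.3b} and \eqref{2.3c} are equivalent, and it suffices to establish one of them. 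For \eqref{2.3c} I would use $(\nabla_X\eta)(Y)=X(\eta(Y))-\eta(\nabla_X Y)$ and relate $\nabla_X\xi$ to the known behaviour of $f$ by using that, on the distribution $\mathcal D=\ker\eta$, $f$ is invertible onto $\mathcal D$ (rank $2n$), so writing a tangent vector as $fZ+\eta(\cdot)\xi$ and plugging into \eqref{2.3-patra} pins down $\nabla_X\xi$ after using $\nabla_\xi f=0$ and $f\xi=0$.

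For \eqref{E-nS-10b}: start from $Q=\mathrm{id}+\widetilde Q$ and the relation $f^2=-Q+\eta\otimes\xi$, equivalently $Q=-f^2+\eta\otimes\xi$. Differentiate: $(\nabla_X Q)Y=-(\nabla_X f^2)Y+(\nabla_X\eta)(Y)\,\xi+\eta(Y)\,\nabla_X\xi$. Expand $(\nabla_X f^2)Y=(\nabla_X f)(fY)+f((\nabla_X f)Y)$ and substitute \eqref{2.3-patra} twice; the resulting $\xi$- and $fX$-terms should reorganize, using $f\xi=0$, $\eta\circ f=0$, skew-symmetry $g(fX,fY)=-g(f^2X,Y)$, and the already-proved \eqref{2.3b}–\eqref{2.3c}, into exactly $-\beta\{\eta(Y)\widetilde QX+g(\widetilde QX,Y)\xi\}$. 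The identity $g(\widetilde QX,Y)=g(QX,Y)-g(X,Y)=-g(f^2X,Y)+\eta(X)\eta(Y)-g(X,Y)$, i.e. $g(\widetilde QX,Y)=g(fX,fY)-g(X,Y)+\eta(X)\eta(Y)$ rewritten via \eqref{2.2} as $g(\widetilde QX,Y)=g(X,QY)-g(X,Y)=g(X,\widetilde QY)$, confirms $\widetilde Q$ is self-adjoint and helps symmetrize. The main obstacle is this bookkeeping: making sure every application of \eqref{2.3-patra} is in the correct argument slot and that the $g(fX,fY)\xi$ versus $g(\widetilde QX,Y)\xi$ terms are matched correctly — a sign error here is easy, so I would double-check by contracting the final formula with $\xi$ (should give $0=\nabla_\xi Q$ consistency when $X=\xi$) and by verifying self-adjointness of both sides.

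For the two special cases: setting $X=\xi$ in \eqref{E-nS-10b} gives $(\nabla_\xi Q)Y=-\beta\{\eta(Y)\widetilde Q\xi+g(\widetilde Q\xi,Y)\xi\}=0$ since $\widetilde Q\xi=Q\xi-\xi=0$ by \eqref{2.1}; hence $\nabla_\xi Q=0$. For $\tr Q=const$: take the metric trace of \eqref{E-nS-10b}, i.e. sum $g((\nabla_X Q)e_i,e_i)$ over an orthonormal frame $\{e_i\}$ to get $X(\tr Q)=-\beta\{\eta(e_i)g(\widetilde QX,e_i)+g(\widetilde QX,e_i)\eta(e_i)\}$ summed, which equals $-2\beta\,\eta(\widetilde QX)=-2\beta\,g(\widetilde QX,\xi)=-2\beta\,(\eta(QX)-\eta(X))=-2\beta(\eta(X)-\eta(X))=0$, using $\eta\circ Q=\eta$; since $X$ is arbitrary, $d(\tr Q)=0$, so $\tr Q$ is constant.
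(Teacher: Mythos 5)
Your proof of \eqref{E-nS-10b} --- differentiating $Q=-f^2+\eta\otimes\xi$, expanding $(\nabla_X f^2)Y=(\nabla_X f)(fY)+f((\nabla_X f)Y)$, and substituting \eqref{2.3-patra} twice together with \eqref{2.3b}--\eqref{2.3c} --- is exactly the paper's computation, and your derivations of $\nabla_\xi Q=0$ and $\tr Q=const$ are correct. For \eqref{2.3b}--\eqref{2.3c} the paper merely cites \cite{rov-126}; of your two sketches, the second (put $Y=\xi$ in \eqref{2.3-patra} to get $f(\nabla_X\xi-\beta X)=0$, then use that the kernel of $f$ is spanned by $\xi$ together with $g(\nabla_X\xi,\xi)=0$) is the standard argument and closes the gap, whereas your first sketch leaves $\nabla_X\xi$ and $\nabla_X Q$ entangled in one equation and should be dropped.
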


\begin{proof}
For the proof of \eqref{2.3b} and \eqref{2.3c} see \cite{rov-126}.
Using \eqref{2.1}, we find
\begin{align}\label{E-nS-10b1}
 (\nabla_X\,Q)Y = \nabla_X\,(QY)-Q\nabla_X\,Y = -(\nabla_X\,f^2)Y + \eta(Y)\nabla_X\,\xi + (\nabla_X\,\eta)(Y)\,\xi
  \quad (X,Y\in\mathfrak{X}_M).
\end{align}
Then using \eqref{2.3-patra}, \eqref{2.3b} and the equality $Q={\rm id}+\widetilde Q$ in \eqref{E-nS-10b1}, we find
\begin{align*}%\label{E-nS-10b2}
\notag
 (\nabla_X\,Q)Y &= -f(\nabla_X f)Y -(\nabla_X f)fY + \beta\eta(Y)\{X-\eta(X)\,\xi\} +\beta\{g(X,Y)-\eta(X)\,\eta(Y)\}\,\xi \\
 & = -\beta\,\eta(Y)\,\widetilde Q X -\beta\,g(\widetilde Q X, Y)\,\xi \quad (X,Y\in\mathfrak{X}_M),
\end{align*}
that completes the proof of \eqref{E-nS-10b}.
\end{proof}

Using \eqref{2.3b} for a weak $\beta$-KM we obtain $\nabla_\xi\,\xi=0$ and
\begin{equation}\label{2.3d}
 \pounds_\xi\,g = 2\,\beta\{g - \eta\otimes\eta\} .
\end{equation}

%We~introduce
%A \textit{weak contact metric structure} is
%a weak almost contact metric structure satisfying
%%\eqref{2.3}.
%\begin{align}\label{2.3}
% \Phi=d\eta.
%\end{align}

\begin{proposition}[see \cite{rov-126}]
\label{P-2.1}
A weak $\beta$-KM satisfies the following conditions:
\begin{align*}%\label{Eq-almost-K}
 {\cal N}^{\,(1)} = 0,\quad
 d\eta = 0,\quad
 d\Phi = 2\,\beta\,\eta\wedge\Phi .
\end{align*}
where the {fundamental $2$-form} $\Phi$ is defined by
 $\Phi(X,Y)=g(X,{f} Y)$ for all $X,Y\in\mathfrak{X}_M$.
\end{proposition}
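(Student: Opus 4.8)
The plan is to verify the three assertions of Proposition~\ref{P-2.1} in turn, using the defining identity \eqref{2.3-patra} and its consequences \eqref{2.3b}--\eqref{E-nS-10b}, together with the structural facts about $f$, $Q$, $\xi$, $\eta$ recalled in Section~\ref{sec:01}. First I would handle $d\eta=0$: from the coordinate-free formula $2\,d\eta(X,Y)=X(\eta(Y))-Y(\eta(X))-\eta([X,Y])$ and the identity $(\nabla_X\eta)(Y)=X(\eta(Y))-\eta(\nabla_X Y)$, one gets $2\,d\eta(X,Y)=(\nabla_X\eta)(Y)-(\nabla_Y\eta)(X)$; plugging in \eqref{2.3c} the right-hand side is symmetric in $X,Y$, hence vanishes. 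This is the easy part.

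Next, for $d\Phi=2\,\beta\,\eta\wedge\Phi$ I would compute $(\nabla_X\Phi)(Y,Z)$ from the definition $\Phi(Y,Z)=g(Y,fZ)$, so that $(\nabla_X\Phi)(Y,Z)=g(Y,(\nabla_X f)Z)$, and substitute \eqref{2.3-patra} to obtain $(\nabla_X\Phi)(Y,Z)=\beta\{g(fX,Z)\eta(Y)-\eta(Z)\,g(Y,fX)\}=\beta\{\eta(Y)\Phi(X,Z)-\eta(Z)\Phi(Y,X)\}$ after using skew-symmetry of $f$ and $g(fX,Z)=-g(X,fZ)=-\Phi(X,Z)$; one has to be careful with the sign conventions here, but it collapses to $\beta(\eta\wedge\Phi)$-type terms. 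Then I would use the standard formula expressing $d\Phi$ as the alternating sum of $(\nabla_X\Phi)(Y,Z)$ over cyclic permutations of $X,Y,Z$, and check that the cyclic sum of the above expression equals $2\beta$ times the cyclic sum defining $3(\eta\wedge\Phi)(X,Y,Z)$; since $d\eta=0$ and $\nabla\Phi$ is already computed, no Nijenhuis terms intervene. The bookkeeping of the $\binom{3}{1}$ cyclic terms and the normalization of $\eta\wedge\Phi$ is the main place where arithmetic slips can occur, so that is the step I would write out most carefully.

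Finally, for ${\cal N}^{(1)}=0$ I would argue that normality of a weak almost contact metric structure is equivalent (as in the classical case, cf.\ the discussion before Definition~\ref{D-basic} and \cite{rov-119}) to the vanishing of $(\nabla_X f)Y$ being expressible purely through $g(fX,Y)\xi$, $\eta(Y)fX$ and curvature-free terms; concretely one expands $N^{(1)}(X,Y)=f^2[X,Y]+[fX,fY]-f[fX,Y]-f[X,fY]+2\,d\eta(X,Y)\xi$, rewrites each bracket via $\nabla$ (torsion-free), and substitutes \eqref{2.3-patra} repeatedly. Using $d\eta=0$ (just proved), $f\xi=0$, $\eta\circ f=0$, $[Q,f]=0$ and $f^2=-Q+\eta\otimes\xi$, all $\beta$-terms cancel in pairs and one is left with $0$. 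I expect the bracket-to-connection rewriting combined with tracking where $Q$ versus $\mathrm{id}$ appears (the ``weak'' modification relative to the classical Kenmotsu computation) to be the genuine obstacle: one must confirm that the extra $\widetilde Q$-terms coming from $f^2=-Q+\eta\otimes\xi$ and from \eqref{E-nS-10b} do not obstruct the cancellation. Alternatively, and perhaps more cleanly, I would invoke the known criterion for weak $\beta$-KM from \cite{rst-55,rov-126} that \eqref{2.3-patra} with the stated conditions forces normality, thereby reducing this third claim to a citation.
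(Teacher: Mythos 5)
Your proposal is essentially correct, but note that the paper contains no proof of Proposition~\ref{P-2.1}: it is imported from \cite{rov-126} with a bare citation, so your computation is a self-contained verification rather than a variant of an argument in the text. All three steps go through. For $d\eta=0$ the argument is exactly right: \eqref{2.3c} is symmetric in $X,Y$, so the antisymmetrization $2\,d\eta(X,Y)=(\nabla_X\eta)(Y)-(\nabla_Y\eta)(X)$ vanishes. For $d\Phi$, the identity $(\nabla_X\Phi)(Y,Z)=g(Y,(\nabla_Xf)Z)$ together with \eqref{2.3-patra} gives $(\nabla_X\Phi)(Y,Z)=\beta\{-\eta(Y)\Phi(X,Z)+\eta(Z)\Phi(X,Y)\}$; your displayed intermediate expression has the sign of the $\eta(Y)\Phi(X,Z)$ term wrong (since $g(fX,Z)=-\Phi(X,Z)$), but this is precisely the kind of slip you flagged, and the cyclic sum does yield $2\beta\,\eta\wedge\Phi$ with the paper's normalizations of $d$ and $\wedge$ (the ones making $2\,d\eta(X,Y)=X\eta(Y)-Y\eta(X)-\eta([X,Y])$). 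For ${\cal N}^{(1)}=0$, your bracket-to-connection rewriting collapses, after substituting \eqref{2.3-patra}, using $f\xi=0$ and the already-established $d\eta=0$, to
\begin{align*}
 N^{(1)}(X,Y)=(\nabla_{fX}f)Y-(\nabla_{fY}f)X+f(\nabla_Yf)X-f(\nabla_Xf)Y
 =\beta\,\{g(f^2X,Y)-g(f^2Y,X)\}\,\xi ,
\end{align*}
and the obstruction you worried about does not arise: since $Q$ is self-adjoint, $f^2=-Q+\eta\otimes\xi$ is self-adjoint, so $g(f^2X,Y)$ is symmetric and the remaining term vanishes; no residual $\widetilde Q$-terms survive. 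The direct computation is therefore a legitimate, and arguably more informative, alternative to the citation the paper relies on; your fallback of reducing the normality claim to a reference is exactly what the paper itself does.
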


By Proposition~\ref{P-2.1}, we get
\[
 0= d^2\Phi=2\,d\beta\wedge\eta\wedge\Phi.
\]
Thus, if $\dim M>3$, then the condition $d\beta\wedge\eta=0$ follows from \eqref{2.3-patra}.

%The warped product, see \cite{olszak1991normal}, %$\mathbb{R}\times_{\sigma} \bar M$ of $\mathbb{R}$
%and a K\"{a}hler mani\-fold $(\bar M, \bar g, \bar J)$
%with the metric $g=dt^2\oplus \sigma^2\,\bar g$ and
%the function $\sigma>0$ given on $(-\varepsilon, %\varepsilon)$ and satisfying
%\begin{equation}\label{E-f-warp}
% (\partial_t\,\sigma)/\sigma=\beta,
% %f(t)=ce^t,
%\end{equation}
%admits a $\beta$-Kenmotsu structure $(\xi,\eta,{f})$ with %$\xi=\partial_t, \eta=dt$ and $f|_{\xi^\bot}=\bar J$, see
%% with the properties $\eta(\xi)=1$ and
% \eqref{2.3-patra}.
% see \cite{olszak1991normal};
%In contrast, any point of a $\beta$-Kenmotsu manifold has a %neighborhood, which is a warped product $(-\varepsilon, %\varepsilon)\times_{\sigma}\bar M$
%of an interval and a K\"{a}hler mani\-fold, where $\sigma$ %satisfies~\eqref{E-f-warp}.

\begin{definition}\label{D-wK2}\rm
A \textit{weak Hermitian structure} on an even-dimensional Riemannian manifold
$(\bar M, \bar g)$, equipped with a non-degenerate skew-symmetric {\rm (1,1)}-tensor $J$ is defined by condition $J^{\,2}<0$.
If~$\bar\nabla J=0$, where $\bar\nabla$ is the Levi-Civita connection of $\bar g$, then such~$(\bar g, J)$ is called a \textit{weak K\"{a}hler structure} on $M$.
\end{definition}

L.\,P. Eisenhart \cite{E-1923} proved that if a Riemannian manifold $(\bar M, \bar g)$ admits a pa\-rallel symmetric 2-tensor other than the constant
multiple of $\bar g$, then it is reducible.
Some authors, e.g., \cite{H-2022}, studied and classified (skew-)symmetric parallel 2-tensors on a Riemannian manifold.

\begin{example}\rm
According to the results in \cite{E-1923}, a weak K\"{a}hler~manifold with $J^2\ne -c^2\,{\rm id}$, where $c\in\mathbb{R}$, is reducible.
Take two (or even more) Hermitian manifolds $(\bar M_i,\bar g_i, J_i)$, hence $J_i^2=-{\rm id}_{\,i}$.
The product $\prod_{\,i}(\bar M_i,\bar g_i, c_i J_i)$, where $c_i\not\in\{0,\pm1\}$ are different constants, is a weak
Hermitian manifold with $Q=\bigoplus_{\,i}c_i^2\,{\rm id}_{\,i}$.
Moreover, if $(\bar M_i,\bar g_i, J_i)$ are K\"{a}hler~manifolds, then $\prod_{\,i}(\bar M_i,\bar g_i, c_i J_i)$ is a weak K\"{a}hler~manifold.
\end{example}

\begin{proposition}[see \cite{rov-126}]\label{T-2.1}
Every point of a weak $\beta$-KM $M({f},Q, \xi,\eta,g)$ has a neighborhood $U$ that is isometric to a twisted product $(-\varepsilon,\varepsilon)\times_\sigma\bar M$
$($a warped product if $\nabla\beta$ is collinear with $\xi)$, where $(\bar M,\bar g, J)$ is a weak Hermitian manifold  (weak K\"{a}hler manifold, respectively) and $(\partial_t\,\sigma)/\sigma=\beta$ is true.
Moreover, if $M$ is simply connected and complete, then the isometry is global.
\end{proposition}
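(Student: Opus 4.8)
The plan is to construct the twisted-product splitting locally via a foliated-chart argument based on the structure equations \eqref{2.3b}–\eqref{E-nS-10b} and Proposition~\ref{P-2.1}. First I would observe that $d\eta=0$ (Proposition~\ref{P-2.1}), so locally $\eta=dt$ for a smooth function $t$; the level sets of $t$ are the leaves of the foliation $\mathcal D=\ker\eta$, and $\xi=\nabla t$ is a unit geodesic field by the Proposition following Definition~\ref{D-basic} (equivalently $\nabla_\xi\xi=0$, which also follows from \eqref{2.3b} with $X=\xi$). Since $\mathcal D^\perp=\mathbb R\,\xi$ is spanned by a geodesic unit field orthogonal to the leaves, a neighborhood $U$ of any point is diffeomorphic to $(-\varepsilon,\varepsilon)\times N$ with $\xi=\partial_t$ and $g=dt^2+g_t$, where $g_t$ is a $t$-dependent family of metrics on a fixed leaf $N$.

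Next I would identify the $t$-dependence of $g_t$ using \eqref{2.3d}. Restricting $\pounds_\xi g=2\beta\{g-\eta\otimes\eta\}$ to vectors tangent to the leaves gives $\partial_t g_t = 2\beta\,g_t$; if $\nabla\beta$ is collinear with $\xi$ then $\beta=\beta(t)$ along each normal geodesic (using the extra hypothesis $d\beta\wedge\eta=0$ when $n=1$, and Proposition~\ref{P-2.1} otherwise), so integrating yields $g_t=\sigma(t)^2\,\bar g$ with $\bar g:=g_0$ on $N$ and $(\partial_t\sigma)/\sigma=\beta$ — the warped product case. In general $\beta$ may also depend on the leaf coordinates, and then the same ODE along each normal geodesic gives $g_t=\sigma(t,x)^2\,\bar g(x)$ with $\sigma$ a positive function on $U$ and $(\partial_t\sigma)/\sigma=\beta$, which is exactly the twisted product $(-\varepsilon,\varepsilon)\times_\sigma\bar M$ with $\bar M=(N,\bar g)$.

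It remains to transport the tensor $f$ to $\bar M$ and check it defines a weak Hermitian (resp. weak Kähler) structure. Since $f\xi=0$ and $f$ preserves $\mathcal D$, on each leaf $f$ restricts to a skew-symmetric $(1,1)$-tensor $J$ with $J^2=-Q|_{\mathcal D}<0$ by \eqref{2.1}–\eqref{2.2} and nonsingularity of $Q$, so $(\bar M,\bar g,J)$ is weak Hermitian. To get parallelism $\bar\nabla J=0$ in the warped-product case, I would use \eqref{2.3-patra}: for $X,Y\in\mathcal D$ the defining identity $(\nabla_X f)Y=\beta\{g(fX,Y)\xi-\eta(Y)fX\}=\beta\,g(fX,Y)\,\xi$ has no component tangent to the leaves, and combining this with the O'Neill/warped-product formulas relating $\nabla$ and $\bar\nabla$ shows the tangential part of $(\bar\nabla_X J)Y$ vanishes, hence $\bar\nabla J=0$. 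Finally, for the global statement, if $M$ is simply connected and complete then $\xi=\nabla t$ is a complete unit geodesic field whose orthogonal distribution is integrable with totally umbilic leaves, so the de Rham–type splitting theorem for (twisted) warped products promotes the local isometry to a global one. The main obstacle I anticipate is the careful bookkeeping in the third paragraph: verifying that the twisting function $\sigma$ is globally well-defined and that the induced structure $(\bar g,J)$ is genuinely independent of $t$ (not merely fiberwise isomorphic), which requires propagating $f$ along the normal geodesics and checking compatibility with the ODE for $g_t$ — i.e. that $\pounds_\xi f$ and $\pounds_\xi Q$ vanish, which is supplied by $\nabla_\xi f=0$ (from \eqref{2.3-patra} with $X=\xi$) and $\nabla_\xi Q=0$ (from the Lemma).
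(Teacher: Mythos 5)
The paper does not actually prove Proposition~\ref{T-2.1} --- it is imported from \cite{rov-126} --- but the argument there is essentially the one you outline: $d\eta=0$ gives $\eta=dt$ locally, $\xi=\nabla t$ is a geodesic unit field normal to the totally umbilical foliation $\ker\eta$ whose Weingarten operator is $-\beta\,\mathrm{id}$ by \eqref{2.3b}, integrating \eqref{2.3d} along the flow of $\xi$ yields the twisted/warped factor $\sigma$ with $(\partial_t\sigma)/\sigma=\beta$, and a Ponge--Reckziegel-type splitting theorem gives the global statement in the simply connected complete case. Your sketch is correct; the one point to phrase more carefully is that descending $f$ to the fiber requires $\pounds_\xi f=0$ (i.e. $N^{(3)}=0$, which holds by normality, or by a one-line computation combining $\nabla_\xi f=0$ with \eqref{2.3b}), which is not literally the same statement as $\nabla_\xi f=0$.
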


\begin{example}\label{Ex-warped}\rm
%, and $\bar n=\dim\bar M$.
A {warped product} $\mathbb{R}\times_\sigma\bar M$
of a Riemannian manifold $(\bar M,\bar g)$ and a real line $\mathbb{R}$
%(called the \textit{base})
is the product $M=\mathbb{R}\,\times\bar M$ with the metric $g=dt^2\oplus \sigma^2\,\bar g$, where $\sigma>0$ is a smooth function on $\mathbb{R}$.
(If $\sigma>0$ is a smooth function on $M$, then we get a twisted product).
%, called a warping function.
Set $\xi=\partial_t$.
For a {warped product}, the~Levi-Civita connections, $\nabla$ of $g$ and $\bar\nabla$ of $\bar g$, are related as follows:

\noindent\ \
(i) $\nabla_\xi\,\xi= 0$, $\nabla_X\,\xi=\nabla_\xi\,X= \xi(\log\sigma)X$ for $X\perp\xi$.

\noindent\ \
(ii) $\pi_{1*}(\nabla_XY) = -g(X,Y)\,\xi(\log\sigma)$, where $\pi_1: M \to\mathbb{R}$ is the orthoprojector.

\noindent\ \
(iii) $\pi_{2*}(\nabla_XY)$ is the lift of $\bar\nabla_XY$, where $\pi_2: M \to\bar M$ is the orthoprojector.

\smallskip\noindent
Submanifolds $\{t\}\times\bar M$ (called the \textit{fibers}) are totally umbilical, i.e., the Weingarten operator $A_\xi=-\nabla\xi$ on ${\cal D}$
(the distribution on $M$ orthogonal to $\xi$) is conformal with the factor $-\xi(\log\sigma)$, see (ii).
Note that $\sigma$ is constant along the fibers; thus, $X(\sigma)=\xi(\sigma)\,\eta(X)$ for all $X\in\mathfrak{X}_M$.

Let $(\bar M,\bar g, J)$ be a weak Hermitian manifold,
(e.g., a 4-dimensio\-nal Lie group, whose metric Lie algebra belongs to \cite[Table~4]{H-2022})
and $\partial_t\sigma\ne0$.
Then the warped product $\mathbb{R}\times_\sigma\bar M$ is a weak $\beta$-KM $M(f,\xi,\eta,g)$
with $\beta = (\partial_t\sigma)/\sigma$ and the following tensors on $M=\mathbb{R}\times\bar M$:
\begin{equation*}%\label{E-struc}
 {f} = \left(\hskip-1mm
             \begin{array}{cc}
               0 & 0 \\
               0 & J \\
             \end{array}
           \hskip-1mm\right),\ \
 Q = \left(\hskip-1mm
             \begin{array}{cc}
               0 & 0 \\
               0 & - J^{\,2}
%         -\bar f^{\,2}
\\
             \end{array}
           \hskip-1mm\right),\ \
           \xi = \left(\hskip-1mm
            \begin{array}{c}
               \partial_t  \\
               0 \\
             \end{array}
           \hskip-2mm\right),\ \
        \eta = (dt, 0) ,\ \
           g = \left(\hskip-1mm
             \begin{array}{cc}
%             dt\otimes
              dt^2 & 0 \\
               0 &
%e^{2\,\sigma}
\sigma^2\bar g \\
             \end{array}
           \hskip-1mm\right).
\end{equation*}
Note that $X(\beta)=0\ (X\perp\xi)$.
 If $X,Y\in{\cal D}$, then
%\[
 $\pi_{2*}((\nabla_X{f})Y) =(\bar\nabla_X J)Y=0$
 and
 $\pi_{1*}((\nabla_X{f})Y) =-\beta\,g(X,{f} Y)$.
%\]
If~$X=\xi$ and $Y\in{\cal D}$, then
%\[
$(\nabla_\xi{f})Y=\beta({f} Y)-{f}(\beta Y)=0$.
%\]
If~$X\in{\cal D}$ and $Y=\xi$, then
\[
 (\nabla_X{f})\xi=-{f}\nabla_X\,\xi=-\beta\,{f} X.
\]
Also, we get $(\nabla_\xi{f})\,\xi=0$. By the above, the condition \eqref{2.3-patra} is true.
\end{example}

Let $\Ric^\sharp$ be a~(1,1)-tensor adjoint to the {Ricci tensor}
-- the suitable trace of the curvature tensor:
\begin{equation*}
%\label{eq:ricci}
 {\rm Ric}\,(X,Y) = {\rm trace}_{\,g}(Z\to R_{\,Z,X}\,Y),\quad
 R_{X, Y} = [\nabla_{X},\nabla_{Y}] - \nabla_{[X,Y]}\quad (X,Y,Z\in\mathfrak{X}_M).
\end{equation*}
The scalar curvature of a Riemannian manifold $(M,g)$ is defined as ${r}=\tr_{\,g}\Ric=\tr\Ric^\sharp$.

In the rest of this paper we assume $\beta=const\ne0$.

\begin{lemma}[see Lemma 3 of \cite{rov-126}]
%[see~\cite{ghosh2019ricci,olszak1991normal,kenmotsu1972class} for $\beta=1$]
%\label{L-Ric-1}
The following formulas hold on a weak $\beta$-KM:
\begin{align}\label{2.4}
 & R_{X, Y}\,\xi = \beta^2(\eta(X)Y - \eta(Y)X)\quad (X,Y\in\mathfrak{X}_M),\\
\label{2.5-patra}
 & \Ric^\sharp \xi = -2\,n\beta^2\xi ,\\
 \label{E-L-02b}
 & (\nabla_\xi\Ric^\sharp) X = -2\,\beta\Ric^\sharp X -4\,n\beta^3 X \quad (X\in\mathfrak{X}_M) ,\\
\label{3.1A-patra}
 &\xi({r}) = -2\,\beta\,({r}+2\,n(2\,n+1)\beta^2) .
\end{align}
Differentiating \eqref{2.5-patra} along $X$ and using \eqref{2.3b} gives %\eqref{E-L-02a}.
\begin{align}\label{E-L-02a}
 (\nabla_X\Ric^\sharp)\xi=-\beta\Ric^\sharp X-2\,n\beta^3 X\quad (X\in\mathfrak{X}_M) .
\end{align}
\end{lemma}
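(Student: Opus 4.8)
\textbf{Proof proposal for \eqref{E-L-02a}.}
The plan is to differentiate covariantly the identity \eqref{2.5-patra}, namely $\Ric^\sharp\xi=-2\,n\beta^2\xi$, along an arbitrary $X\in\mathfrak{X}_M$, and to isolate the desired term using the Leibniz rule. Since $\Ric^\sharp$ is a $(1,1)$-tensor, one has $\nabla_X(\Ric^\sharp\xi)=(\nabla_X\Ric^\sharp)\xi+\Ric^\sharp(\nabla_X\xi)$, so once the other two pieces are evaluated, $(\nabla_X\Ric^\sharp)\xi$ is determined.

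For the term $\Ric^\sharp(\nabla_X\xi)$, I would substitute \eqref{2.3b}, i.e. $\nabla_X\xi=\beta\{X-\eta(X)\xi\}$, and then re-use \eqref{2.5-patra} to replace $\Ric^\sharp\xi$; this yields $\Ric^\sharp(\nabla_X\xi)=\beta\,\Ric^\sharp X+2\,n\beta^3\eta(X)\xi$. For the left-hand side, since $\beta=const$, the scalar $-2\,n\beta^2$ passes through $\nabla_X$, so $\nabla_X(-2\,n\beta^2\xi)=-2\,n\beta^2\nabla_X\xi$, and applying \eqref{2.3b} once more gives $-2\,n\beta^3 X+2\,n\beta^3\eta(X)\xi$.

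Equating the two expressions, the contributions $2\,n\beta^3\eta(X)\xi$ cancel on both sides, and what remains is $(\nabla_X\Ric^\sharp)\xi+\beta\,\Ric^\sharp X=-2\,n\beta^3 X$, that is, $(\nabla_X\Ric^\sharp)\xi=-\beta\,\Ric^\sharp X-2\,n\beta^3 X$, as claimed. I do not expect any genuine obstacle here: the argument is a direct application of the Leibniz rule together with \eqref{2.3b}. The only points that need a moment's care are the double use of \eqref{2.5-patra} (to eliminate $\Ric^\sharp\xi$ arising on the right) and the hypothesis $\beta=const$ (so that $\beta^2$ commutes with $\nabla_X$); both are exactly what makes the $\eta(X)\xi$ terms coincide and drop out.
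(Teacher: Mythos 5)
Your computation is correct and is exactly the argument the paper intends: the text derives \eqref{E-L-02a} precisely by differentiating \eqref{2.5-patra} along $X$ via the Leibniz rule, substituting \eqref{2.3b} for $\nabla_X\xi$ (with $\beta$ constant), re-using \eqref{2.5-patra} to eliminate $\Ric^\sharp\xi$, and cancelling the $2\,n\beta^3\eta(X)\,\xi$ terms. No discrepancies.
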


\begin{lemma}[see  Lemma 5 of \cite{rov-126}]
On an $\eta$-Einstein weak $\beta$-KM, see \eqref{E-eta-Einstein},
%$M^{2n+1}({f},Q,\xi,\eta,g)$,
the expression of the Ricci tensor is the following:
\begin{align}\label{E-42-proc}
\notag
 & {\rm Ric} = \big(\frac{{r}}{2\,n}+\beta^2\big)\,g -\big(\frac{{r}}{2\,n}+(2n+1)\beta^2\big)\eta\otimes\eta , \\
 & {\rm Ric}^\sharp X = \big(\frac{{r}}{2\,n}+\beta^2\big)X -\big(\frac{{r}}{2\,n}+(2n+1)\beta^2\big)\eta(X)\,\xi \quad (X\in\mathfrak{X}_M).
\end{align}
\end{lemma}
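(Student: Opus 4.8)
The plan is to treat the $\eta$-Einstein hypothesis $\Ric = \lambda\,g + \mu\,\eta\otimes\eta$, equivalently $\Ric^\sharp X = \lambda X + \mu\,\eta(X)\,\xi$ for all $X\in\mathfrak{X}_M$, as a linear system for the two coefficient functions $\lambda,\mu$, and to determine them by two independent scalar contractions — evaluation on $\xi$ and the $g$-trace — using the curvature identities already established for a weak $\beta$-KM with $\beta=const$.

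First I would set $X=\xi$ in $\Ric^\sharp X = \lambda X + \mu\,\eta(X)\,\xi$ and use $\eta(\xi)=1$ to get $\Ric^\sharp\xi = (\lambda+\mu)\,\xi$. Comparing this with \eqref{2.5-patra}, namely $\Ric^\sharp\xi = -2\,n\beta^2\xi$, yields the first relation $\lambda+\mu = -2\,n\beta^2$.

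Next I would take the $g$-trace of $\Ric^\sharp X = \lambda X + \mu\,\eta(X)\,\xi$. Since $\tr({\rm id}) = 2n+1$ on $M^{2n+1}$ and $\tr\{X\mapsto\eta(X)\,\xi\} = \eta(\xi) = 1$, while $\tr\Ric^\sharp = {r}$, this gives the second relation $(2n+1)\lambda + \mu = {r}$. Subtracting the first relation gives $2n\lambda = {r} + 2n\beta^2$, hence $\lambda = \frac{{r}}{2n} + \beta^2$, and then $\mu = -2\,n\beta^2 - \lambda = -\big(\frac{{r}}{2n} + (2n+1)\beta^2\big)$. Substituting these expressions back into $\Ric = \lambda\,g + \mu\,\eta\otimes\eta$ (and the adjoint identity for $\Ric^\sharp$) produces \eqref{E-42-proc}.

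I do not expect a genuine obstacle here: once \eqref{2.5-patra} is in hand, the argument is pure linear algebra. The only point requiring a little care is conceptual rather than technical — the $\eta$-Einstein coefficients are not assumed in the asserted form a priori but are forced to this form by the two contractions; compatibility with the evolution law \eqref{3.1A-patra} for ${r}$ is then automatic and plays no role in the proof.
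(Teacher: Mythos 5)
Your proposal is correct: evaluating the $\eta$-Einstein identity on $\xi$ and comparing with \eqref{2.5-patra}, then taking the $g$-trace, gives the linear system $\lambda+\mu=-2\,n\beta^2$, $(2n+1)\lambda+\mu={r}$, whose solution is exactly the pair of coefficients in \eqref{E-42-proc}. The paper itself offers no proof here (it only cites Lemma 5 of \cite{rov-126}), and your two-contraction argument is the standard derivation of that statement.
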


\section{The $\ast$-Ricci Tensor of Weak $\beta$-Kenmotsu Manifolds}
\label{sec:03}

Here, we define the $\ast$-Ricci tensor and $\ast$-$\eta$-{RS} for week almost contact metric mani\-folds,
establish relation of $\ast$-Ricci tensor with the classical tensors and give Example~\ref{Ex-ast-RS}.

We define $\ast$-{\em Ricci tensor} of a weak almost contact metric manifold $M^{2n+1}(f,Q,\xi,\eta,g)$ using the same formula, see \eqref{E-ast-Ricci-tensor}, as for almost contact metric manifolds:
\begin{align*}
 \Ric^\ast(X,Y)
 =(1/2)\,\tr\{Z\to f\,R_{X, fY} Z\}
 = (1/2)\,\tr\{Z\to R_{X, fY} fZ\}
  \quad (X,Y\in\mathfrak{X}_M).
\end{align*}
Note that $\Ric^\ast$ is not symmetric.
The $\ast$-scalar curvature is given by
\[
{r}^\ast=\tr_{g}\Ric^\ast=\sum\nolimits_{\,i}\Ric^\ast(e_i,e_i),
\]
where $e_i\ (1\le i\le 2n+1)$ is a local orthonormal basis of $TM$.
A weak almost contact metric manifold $M(f,Q,\xi,\eta,g)$
will be called $\ast$-$\eta$-{\em Einstein manifold} ($\ast$-{\em Einstein manifold} when $\mu=0$)~if
\begin{align}\label{E-ast-Einstein}
 \Ric^\ast = \lambda\,g + \mu\,\eta\otimes\eta
\end{align}
for some $\lambda,\mu\in C^\infty(M)$.
Thus, a weak almost contact metric manifold is $\ast$-Einstein  manifold if
$\Ric^\ast = \lambda\,g$ for some $\lambda\in C^\infty(M)$.
Note that ${r}^\ast$ is not necessarily constant on a $\ast$-Einstein manifold.

The following result generalizes Proposition~10 of \cite{kenmotsu1972class}.

\begin{proposition}
 On a weak $\beta$-KM, we have
\begin{align}\label{E-lem2-1}
 & R_{X,Y}fZ - f\,R_{X,Y}Z = \beta^2\{
 g(Y,Z)\,fX - g(X,Z)\,fY + g(X,fZ)\,Y - g(Y,fZ)\,X \} ,\\
 \label{E-lem2-2}
 & R_{fX,fY}Z - R_{X,QY}Z = \beta^2\{
 g(QZ,Y)\,X - g(X,Z)\,QY + g(Z,fX)\,fY - g(Z,fY)\,fX \}.
\end{align}
\end{proposition}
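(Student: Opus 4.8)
The plan is to derive \eqref{E-lem2-1} by differentiating the defining relation \eqref{2.3-patra} a second time and antisymmetrizing, then obtain \eqref{E-lem2-2} from \eqref{E-lem2-1} by a Bianchi-type manipulation. First I would write out $(\nabla_X\nabla_Y f)Z$ from \eqref{2.3-patra}: applying $\nabla_X$ to $(\nabla_Y f)Z = \beta\{g(fY,Z)\xi - \eta(Z)fY\}$ and using that $\beta$ is constant, together with \eqref{2.3b} for $\nabla_X\xi$, \eqref{2.3c} for $(\nabla_X\eta)(Z)$, and \eqref{2.3-patra} again for $(\nabla_X f)Y$ and $(\nabla_X f)(fY)$. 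Then I would form the commutator $(\nabla_X\nabla_Y f)Z - (\nabla_Y\nabla_X f)Z - (\nabla_{[X,Y]} f)Z$, which by the Ricci identity for the $(1,1)$-tensor $f$ equals $R_{X,Y}(fZ) - f(R_{X,Y}Z)$. Collecting the terms on the right-hand side should produce exactly the bracket in \eqref{E-lem2-1}; the $\xi$- and $\eta$-terms must cancel after antisymmetrization, which is the consistency check that the computation is on track.

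For \eqref{E-lem2-2}, the idea is to use the first Bianchi identity applied to \eqref{E-lem2-1}. Replacing $X,Y,Z$ cyclically in \eqref{E-lem2-1} and using $\sum_{\rm cyc} R_{X,Y}(fZ)=0$ is not immediately what is needed; instead I would substitute $fX, fY$ for two of the slots. Concretely, write \eqref{E-lem2-1} with $X\to fX$: $R_{fX,Y}(fZ) - f R_{fX,Y}Z = \beta^2\{g(Y,Z)f^2X - g(fX,Z)fY + g(fX,fZ)Y - g(Y,fZ)fX\}$, then use $f^2 = -Q+\eta\otimes\xi$ and \eqref{2.2} to rewrite $g(fX,fZ) = g(X,QZ)-\eta(X)\eta(Z)$. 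A parallel substitution $Y\to fY$ in another copy of \eqref{E-lem2-1}, combined with the skew-symmetry of $f$ and the first Bianchi identity to convert $f R_{fX,Y}Z$-type terms into $R_{fX,fY}Z$, should assemble into \eqref{E-lem2-2}. The $\eta$-dependent pieces ought to drop out because $R_{X,Y}\xi$ is already pinned down by \eqref{2.4}, so any residual $\eta\otimes\xi$ contributions can be evaluated explicitly and shown to vanish.

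The main obstacle I expect is bookkeeping in the second step: passing from \eqref{E-lem2-1} to \eqref{E-lem2-2} requires combining several substituted and permuted copies of \eqref{E-lem2-1} with the first Bianchi identity, and one must be careful that the weak structure contributes $Q$ (not $\mathrm{id}$) in the places where classically one would simply get $g(X,Z)Y$ terms — this is exactly where $\widetilde Q$ enters and where the generalization of Kenmotsu's Proposition~10 is nontrivial. A useful sanity check throughout: setting $Q=\mathrm{id}$ (i.e.\ $\widetilde Q=0$) must recover the classical $\beta$-Kenmotsu curvature identities. I would also exploit $[Q,f]=0$ and the self-adjointness of $Q$ freely to keep the $Q$-terms grouped, and verify the final formula by pairing both sides with $\xi$ in each slot against \eqref{2.4} and \eqref{2.5-patra}.
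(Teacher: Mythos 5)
Your plan for \eqref{E-lem2-1} coincides with the paper's proof: apply the Ricci commutation identity $(R_{X,Y}f)Z=R_{X,Y}fZ-fR_{X,Y}Z$ to the antisymmetrized second covariant derivative of \eqref{2.3-patra}, feeding in \eqref{2.3b} and \eqref{2.3c}; that part is sound (and your check that the $\xi$- and $\eta$-terms must cancel is the right one).

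The gap is in the passage to \eqref{E-lem2-2}. Identity \eqref{E-lem2-1} only controls $f$ sitting in the \emph{third} slot of $R$, so substituting $X\to fX$ (and $Y\to fY$) produces terms $R_{fX,Y}fZ$ and $fR_{fX,Y}Z$ that \eqref{E-lem2-1} does not reduce, and the first Bianchi identity alone cannot repair this. If you carry your plan through — $R_{fX,fY}Z=-R_{fY,Z}fX-R_{Z,fX}fY$, apply \eqref{E-lem2-1} to each summand, then Bianchi again on $R_{fY,Z}X+R_{Z,fX}Y$ — you end with $R_{fX,fY}Z=-f^2R_{X,Y}Z+f\big(R_{X,fY}Z+R_{fX,Y}Z\big)+\beta^2(\cdots)$, i.e.\ a leftover $QR_{X,Y}Z$ (not the required $R_{X,QY}Z$) together with an uncontrolled mixed term $f(R_{X,fY}Z+R_{fX,Y}Z)$; nothing in your toolkit cancels these. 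The move that actually closes the argument — and the one the paper makes — is to pass to the $(0,4)$-tensor and use the interchange (pair) symmetry $g(R_{A,B}C,D)=g(R_{C,D}A,B)$: pair \eqref{E-lem2-1} with $fW$, so that $g(R_{X,Y}fZ,fW)=g(R_{fZ,fW}X,Y)$ and, by skew-symmetry of $f$ and \eqref{2.1}, $-g(fR_{X,Y}Z,fW)=g(R_{X,Y}Z,f^2W)=-g(R_{Z,QW}X,Y)+\eta(W)\,g(R_{X,Y}Z,\xi)$. The right-hand side of \eqref{E-lem2-1} then contracts into products of metrics which \eqref{2.2} converts into the $Q$-terms, and \eqref{2.4} evaluates the residual $\eta$-term; relabelling the arguments gives \eqref{E-lem2-2}. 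Your sanity checks (recovering the classical case at $Q=\mathrm{id}$, testing slots against \eqref{2.4}) are useful but do not substitute for this step.
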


\begin{proof} Recall the Ricci identity (commutation formula),
for example \cite{CLN-2006}:
\begin{align*}
 \big(\nabla_X\nabla_Y f - \nabla_Y\nabla_X f
 -\nabla_{[X,Y]} f\big)Z =
 (R_{X,Y}f)Z = R_{X,Y} fZ - f R_{X,Y} Z.
\end{align*}
From this and \eqref{2.3-patra} we obtain \eqref{E-lem2-1}.
Using \eqref{E-lem2-1} and skew-symmetry of $f$, we derive
\begin{align*}
& g(R_{fZ, fW}X, Y) + g( R_{X,Y}Z, f^2W)
 = g(R_{X,Y}fZ, fW) - g(f R_{X,Y}Z, fW) \\
& = \beta^2\{ g(X,Z)\,g(Y, f^2W) - g(Y,Z)\,g(X, f^2W)
 + g(X,fZ)\,g(Y, fW) - g(Y,fZ)\,g(X, fW) \} \\
& = \beta^2\{ g(Y,Z)\,g(X, QW) -g(X,Z)\,g(Y, QW) +g(X,Z)\,\eta(W)\eta(Y) -g(Y,Z)\,\eta(W)\eta(X) \\
& + g(X,fZ)\,g(Y, fW) - g(Y,fZ)\,g(X, fW) \}.
\end{align*}
Then, using \eqref{2.4}, we find \eqref{E-lem2-2}.
\end{proof}

For a weak almost contact metric manifold $M(f,Q,\xi,\eta,g)$,
we will build an $f$-\textit{basis}, see \cite{rov-128},
consis\-ting of mutually orthogonal nonzero vectors at a point $x\in M$.
Let $e_1\in(\ker\eta)_x$ be a unit eigenvector of the self-adjoint operator $Q>0$ with the eigenvalue $\lambda_1>0$.
Then, $fe_1\in(\ker\eta)_x$ is orthogonal to $e_1$ and $Q(fe_1) = f(Qe_1) = \lambda_1 fe_1$.
Thus, the subspace orthogonal to the plane $span\{e_1,fe_1\}$ is $Q$-invariant.
 There exists a unit vector $e_2\in(\ker\eta)_x$ such that $e_2\perp span\{e_1, fe_1\}$
and $Q\,e_2= \lambda_2 e_2$  for some $\lambda_2>0$.
Obviously, $Q(fe_2) = f(Q\,e_2) = \lambda_2 fe_2$.
All five vectors $\{e_1, fe_1,e_2, fe_2, \xi\}$ are nonzero and mutually orthogonal.
Continuing in the same manner, we find a basis $\{e_1, fe_1,\ldots, e_n, fe_n, \xi\}$ of $T_x M$
consisting of mutually orthogonal vectors.
Note that $g(fe_i,fe_i)=g(Qe_i,e_i)=\lambda_i$.

\begin{theorem}\label{T-1-ast}
For a weak $\beta$-KM $M^{2n+1}(f,Q,\xi,\eta,g)$, the $\ast$-{Ricci tensor} and the $\ast$-{scalar curvature}
are related to the Ricci tensor and the scalar curvature by the formulas
\begin{align}\label{E-ast-Ric}
  & \Ric^\ast(X,Y) = \Ric(X,QY) + \beta^2\{(2n-1) g(X,QY) + \eta(X)\,\eta(Y)\},\\
\label{E-ast-scal}
  & {r}^\ast = \tr(Q\Ric^\sharp)
  + \beta^2\{4\,n^2 + (2n-1)\tr\widetilde Q\} .
\end{align}
\end{theorem}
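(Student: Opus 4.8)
The plan is to compute $\Ric^\ast(X,Y)$ directly from its definition as half the trace of $Z\mapsto R_{X,fY}\,fZ$ by expanding the inner operator $R_{X,fY}\,f$ using the curvature-of-$f$ formula \eqref{E-lem2-1}. First I would write
\[
 R_{X,fY}\,fZ = f\,R_{X,fY}\,Z + \beta^2\big\{g(fY,Z)\,fX - g(X,Z)\,f(fY) + g(X,fZ)\,fY - g(fY,fZ)\,X\big\},
\]
and then take the $g$-trace over $Z$ running through an $f$-basis $\{e_1,fe_1,\dots,e_n,fe_n,\xi\}$. The term $\tr\{Z\mapsto f\,R_{X,fY}\,Z\}$ is, by definition of the Ricci tensor and skew-symmetry of $f$, essentially $-\Ric(X,fY)$ composed with $f$ in the appropriate slot; the cleaner route is to use $\tr\{Z\mapsto f R_{X,fY}Z\} = -\tr\{Z\mapsto R_{X,fY}(fZ)\}$ is circular, so instead I would rewrite $f R_{X,fY}Z$ and trace to get $\sum_i g(f R_{X,fY}e_i, e_i)/\lambda_i$-type sums, handling the non-unit $f$-basis carefully, and identify the outcome with $\Ric(fY, fX)$ or equivalently (using \eqref{E-lem2-1} again and the symmetry of $\Ric$) with $\Ric(X,QY)$ up to the lower-order $\beta^2$-correction coming from $f^2 = -Q + \eta\otimes\xi$.

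The second, bookkeeping-heavy piece is the trace of the explicit $\beta^2$-bracket. Summing $g(fY,Z)\,fX - g(X,Z)\,f^2Y + g(X,fZ)\,fY - g(fY,fZ)\,X$ against the dual $f$-basis over $Z$: the first term contributes $g(fY,fX)=g(X,QY)-\eta(X)\eta(Y)$ (a single term, since $\sum_Z g(fY,Z)\,(\cdot) = f Y$-projection), the third term contributes $g(X,f(fY))$-type expression $=-g(X,QY)+\eta(X)\eta(Y)$ with a sign from $g(X,fZ)$ being antisymmetric — these require care. The second term gives $-(\tr\,\mathrm{id})\,f^2Y$ paired appropriately, producing the factor $(2n+1)$ or $(2n)$ that must collapse to the stated $(2n-1)$ after combining with the fourth term $-\sum_Z g(fY,fZ)\,g(X,Z)$, which over the $f$-basis yields $-g(X,QY)+\eta(X)\eta(Y)$ again (using $g(fe_i,fe_i)=\lambda_i$ and $g(e_i,e_i)=1$, plus the $\xi$-slot where $f\xi=0$). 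Assembling $1 - (2n+1) + 1 + \dots$ should land on the coefficient $2n-1$ of $g(X,QY)$ and $+1$ for $\eta(X)\eta(Y)$ after also importing the $\beta^2$-correction from converting $\Ric(X,fY)$-flavored terms into $\Ric(X,QY)$ via $f^2$ acting and \eqref{2.5-patra}.

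For the $\ast$-scalar curvature \eqref{E-ast-scal}, I would set $Y=X=e_i$ (an $f$-basis vector, not unit in general) and sum $\Ric^\ast(e_i,e_i)/$(norm) — but more simply take the metric trace of \eqref{E-ast-Ric}: $\tr_g\{X\mapsto \Ric^\sharp(QX)\} = \tr(Q\,\Ric^\sharp)$ for the first term, then $(2n-1)\beta^2\,\tr_g\{X\mapsto QX\} = (2n-1)\beta^2\,\tr Q = (2n-1)\beta^2(2n+1+\tr\widetilde Q)$, wait — one must be careful that the metric trace of $g(X,QY)$ is $\tr Q$, and the $\eta\otimes\eta$ term traces to $1$. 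Combining $(2n-1)(2n+1) + 1 = 4n^2$ gives exactly $4n^2 + (2n-1)\tr\widetilde Q$ as stated, using $\tr Q = 2n+1+\tr\widetilde Q$ and $\widetilde Q\xi = 0$.

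The main obstacle will be the trace manipulation with the \emph{non-orthonormal} $f$-basis: the vectors $fe_i$ have squared norm $\lambda_i\ne 1$, so the trace of an operator $A$ is $\sum_i\big(g(Ae_i,e_i) + g(A(fe_i),fe_i)/\lambda_i\big) + g(A\xi,\xi)$, and one must verify that all the $\lambda_i$-dependence cancels — in particular that the $\beta^2$-bracket terms, which involve $g(fY,fZ)$, reproduce $g(X,QY)$ rather than something $\lambda_i$-weighted. The identity $Q(fe_i)=\lambda_i fe_i$ and $g(fe_i,fe_i)=\lambda_i$ is what makes this work, but checking it cleanly is the delicate point; everything else is routine substitution of \eqref{E-lem2-1}, \eqref{2.4}, \eqref{2.5-patra} and the algebraic relations $f^2 = -Q+\eta\otimes\xi$, $\eta\circ f = 0$.
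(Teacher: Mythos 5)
Your plan for \eqref{E-ast-Ric} has a genuine gap at its core: tracing \eqref{E-lem2-1} over $Z$ yields no information. Since $\tr(f\circ R_{X,fY})=\tr(R_{X,fY}\circ f)$, the trace of the left-hand side of your displayed identity, $R_{X,fY}fZ-fR_{X,fY}Z$, is identically zero; and a direct check shows the trace of the $\beta^2$-bracket is also zero (the four terms contribute $+g(fY,fX)$, $-g(X,f^2Y)$, $+g(X,f^2Y)$, $-g(fY,fX)$ respectively --- note in particular that $\tr\{Z\mapsto -g(X,Z)f^2Y\}=-g(X,f^2Y)$ is a \emph{single} term, not the $(2n+1)$-weighted quantity you anticipate, since the operator $Z\mapsto g(X,Z)W$ has rank one). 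So your ``assembling $1-(2n+1)+1+\dots$'' rests on a miscount, and the whole computation collapses to $0=0$. You half-notice the circularity but the fallback you sketch (rewriting $fR_{X,fY}Z$ against the $f$-basis and ``identifying the outcome with $\Ric(fY,fX)$'') never supplies the mechanism that actually converts the two $f$'s sitting in the curvature arguments into a single $Q$, which is where the genuine Ricci tensor and the dimension factor $2n$ must come from.

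The paper's proof uses two ingredients you omit. First, the first Bianchi identity rewrites $2\Ric^\ast(X,Y)$ as $\tr\{Z\to -fR_{fY,Z}X\}+\tr\{Z\to -fR_{Z,X}fY\}$, and an $f$-basis computation with the pair symmetries of $R$ and \eqref{2.4} shows the two summands are equal. Second --- and this is the decisive step --- the identity \eqref{E-lem2-2}, which relates $R_{fX,fY}Z$ to $R_{X,QY}Z$, is applied with $Z=e_i$ to turn $\sum_i g(R_{e_i,X}fe_i,fY)$ into $\sum_i g(R_{e_i,X}e_i,QY)$ plus a correction; the term $-g(e_i,e_i)\,QY$ in \eqref{E-lem2-2}, summed over the basis, is what produces the coefficient $-2n$ and hence, combined with $g(fX,fY)=g(X,QY)-\eta(X)\eta(Y)$ from \eqref{2.2}, the stated $(2n-1)g(X,QY)+\eta(X)\eta(Y)$. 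Without \eqref{E-lem2-2} (or an equivalent identity moving $f$ between curvature slots) the passage from $\Ric^\ast$ to $\Ric(\cdot,Q\cdot)$ cannot be made. Your worry about the non-orthonormal $f$-basis is legitimate but secondary, and your derivation of \eqref{E-ast-scal} from \eqref{E-ast-Ric} by metric contraction, with $(2n-1)(2n+1)+1=4n^2$, is correct --- but it presupposes the formula you have not actually established.
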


\begin{proof}
Using the first Bianchi identity, we obtain
\begin{align}\label{E-th1-a}
 2\Ric^\ast(X,Y) = \tr\{Z\to -f\,R_{fY,Z}X\}
 + \tr\{Z\to -f\,R_{Z, X} fY\}.
\end{align}
Let  $e_i\ (1\le i\le 2n+1)$ be a local $f$-basis of $TM$ and $e_{2n+1}=\xi$. Then $e'_i=fe_i/\|fe_i\|\ (1\le i\le 2n)$ is a local basis of ${\cal D}$.
Using \eqref{2.1}, \eqref{2.4}, the skew-symmetry of $f$ and the symmetries of the
curvature tensor, we show that the two terms in the right-hand side of \eqref{E-th1-a} are equal:
\begin{align}\label{E-th1-b}
\notag
 & \tr\{Z\to -f\,R_{fY,Z}X\}
 = -\sum\nolimits_{\,i} g(f R_{fY, e'_i}X, e'_i)
 = \sum\nolimits_{\,i} g(R_{fY, fe_i/\|fe_i\|}X, f^2 e_i/\|fe_i\|) \\
 \notag
 & = \sum\nolimits_{\,i} g(R_{fY, fe_i/\|fe_i\|}X,
 -\lambda_i e_i/\|fe_i\|)
 - \sum\nolimits_{\,i} \eta(e_i)/\|fe_i\|
 g(R_{fY, fe_i/\|fe_i\|}\,\xi, X) \\
 & = -\sum\nolimits_{\,i} (\lambda_i/\|fe_i\|^2)g(R_{fY, fe_i}X, e_i)
 = \sum\nolimits_{\,i} g(R_{e_i,X}fY, fe_i)
 = \tr\{Z\to -f\,R_{Z, X} fY\}.
\end{align}
Using \eqref{E-lem2-2} with $Z=e_i$, we get
\begin{align}\label{E-th1-c}
\notag
& \tr\{Z\to -f\,R_{Z, X} fY\}
= -\sum\nolimits_{\,i} g(R_{e_i,X}f e_i, fY) \\
& =-\sum\nolimits_{\,i} g(R_{e_i,X} e_i, QY) \}
 -\beta^2\{ - 2\,n\,g(X,QY)+ g(fX, fY)\}.
%= (2n-1) g(X,QY) + g(fX, fY),
\end{align}
From \eqref{E-th1-a}, \eqref{E-th1-b} and \eqref{E-th1-c} we obtain \eqref{E-ast-Ric}.
Contracting \eqref{E-ast-Ric} yields \eqref{E-ast-scal}.
\end{proof}

Using Theorem~\ref{T-1-ast}, we can express $\Ric^\ast$ in terms of ${r}^\ast$ on $\beta$-KM as follows.

\begin{corollary}
On a $\ast$-$\eta$-Einstein weak $\beta$-KM the expression of the $\ast$-Ricci tensor is the following:
\begin{align*}%\label{E-Ric-ast}
 \Ric^\ast = \frac{{r}^\ast}{2\,n}\big\{g - \eta\otimes\eta\big\} .
\end{align*}
\end{corollary}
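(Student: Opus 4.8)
The plan is to use Theorem~\ref{T-1-ast} together with \eqref{2.5-patra} to pin down the two functions appearing in the $\ast$-$\eta$-Einstein condition. Write the hypothesis \eqref{E-ast-Einstein} as $\Ric^\ast=\lambda\,g+\mu\,\eta\otimes\eta$ with $\lambda,\mu\in C^\infty(M)$. First I would evaluate this equality at the pair $(\xi,\xi)$: since $g(\xi,\xi)=1$ and $\eta(\xi)=1$, the right-hand side equals $\lambda+\mu$, so $\Ric^\ast(\xi,\xi)=\lambda+\mu$.

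Next I would compute $\Ric^\ast(\xi,\xi)$ independently from \eqref{E-ast-Ric}. Setting $X=Y=\xi$ and using $Q\,\xi=\xi$ from \eqref{2.1} gives $\Ric^\ast(\xi,\xi)=\Ric(\xi,\xi)+\beta^2\{(2n-1)+1\}=\Ric(\xi,\xi)+2n\beta^2$. By \eqref{2.5-patra} we have $\Ric(\xi,\xi)=g(\Ric^\sharp\xi,\xi)=-2n\beta^2$, hence $\Ric^\ast(\xi,\xi)=0$. Comparing the two computations yields $\lambda+\mu=0$, that is $\mu=-\lambda$, so that $\Ric^\ast=\lambda\{g-\eta\otimes\eta\}$.

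Finally I would take the $g$-trace of the last identity. Since $\tr_g g=2n+1$ and $\tr_g(\eta\otimes\eta)=|\xi|^2=1$, we get $r^\ast=\tr_g\Ric^\ast=\lambda\,(2n+1-1)=2n\lambda$, whence $\lambda=r^\ast/(2n)$; substituting back gives the asserted formula. I do not expect a serious obstacle: the only step that is not purely mechanical is the observation that contracting $\Ric^\ast$ with $\xi$ annihilates it — precisely what \eqref{E-ast-Ric} and \eqref{2.5-patra} deliver — and it is this that forces $\mu=-\lambda$; everything else is a single trace computation.
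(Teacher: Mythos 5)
Your proof is correct and follows essentially the same route as the paper: both arguments evaluate the $\ast$-$\eta$-Einstein condition at $(\xi,\xi)$, use \eqref{E-ast-Ric} together with \eqref{2.5-patra} to get $\Ric^\ast(\xi,\xi)=0$ and hence $\lambda+\mu=0$, and then take the $g$-trace to identify $\lambda=r^\ast/(2n)$. The only difference is the order of the two steps, which is immaterial.
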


\begin{proof}
Tracing \eqref{E-ast-Einstein} gives ${r}^\ast=(2\,n+1)\lambda+\mu$.
Putting $X=Y=\xi$ in \eqref{E-ast-Einstein} and using \eqref{E-ast-Ric} yields $\lambda+\mu = 0$. From the above equalities, we express parameters of \eqref{E-ast-Einstein}
as follows: $\lambda=-\mu=\frac{{r}^\ast}{2\,n}$.
\end{proof}

\begin{definition}\rm
Given $\mu,\lambda\in\mathbb{R}$ and a vector field $V$ on a weak almost contact manifold $M(f,Q,\xi,\eta,g)$,
a $\ast$-$\eta$-{\em RS} is defined by the same formula as \eqref{E-sol-ast-eta0},
\begin{align}\label{E-sol-ast-eta}
 (1/2)\,\pounds_V\,g+\Ric^\ast = \lambda\,g + \mu\,\eta\otimes\eta .
\end{align}
If $\mu=0$ in \eqref{E-sol-ast-eta}, then we get a $\ast$-{\em RS} equation on a weak almost contact manifold,
\begin{align}\label{E-sol-ast}
 (1/2)\,\pounds_V\,g+\Ric^\ast =\lambda\,g.
\end{align}
A $\ast$-$\eta$-RS \eqref{E-sol-ast-eta} or a $\ast$-{RS} \eqref{E-sol-ast}
is called \textit{expanding}, \textit{steady}, or \textit{shrinking} if $\,\lambda$, is negative, zero, or positive, respectively.
\end{definition}

Note that the $\ast$-$\eta$-Ricci tensor $\Ric^\ast$ in \eqref{E-sol-ast-eta} or \eqref{E-sol-ast} is necessarily symmetric.
If $V$ is a Killing vector field, i.e., $\pounds_V\,g=0$, then the solitons of \eqref{E-sol-ast-eta} or \eqref{E-sol-ast} are trivial and $g$ becomes an $\ast$-$\eta$-{\em Einstein} metric or an $\ast$-{\em Einstein} metric, respectively.

For a weak $\beta$-KM, using the formula of $\ast$-Ricci tensor \eqref{E-ast-Ric} in the $\ast$-$\eta$-RS equation \eqref{E-sol-ast-eta}, we~get
\begin{align}\label{E-sol-eta2}
 (1/2)\,(\pounds_V g)(X,Y) + \Ric(X,QY) = \lambda\,g(X,Y) - (2n-1)\beta^2 g(X,QY) + (\mu-\beta^2)\eta(X)\eta(Y) ,
\end{align}
where $\mu,\lambda\in\mathbb{R}$; in this case, $Q$ commutes with the Ricci tensor $\Ric^\sharp$: $[\Ric^\sharp, Q]=0$.

The following example of a weak $\beta$-KM and $\ast$-$\eta$-{RS} will illustrate our main results in Section~\ref{sec:04}.

\begin{example}\label{Ex-ast-RS}\rm
Given real $\beta\ne0$, we consider linearly independent vector fields
\[
 e_i = e^{-\beta\,x_{2n+1}}\partial_i,\quad e_{2n+1} = \partial_{2n+1}\quad (1\le i\le 2n)
\]
in $M=\mathbb{R}^{2n+1}(x_1,\ldots,x_{2n+1})$, and define a Riemannian metric $g$ as $g(e_i,e_j)=\delta_{ij}$.
Set $\xi=e_{2n+1}$ and $\eta=dx_{2n+1}$. For $c=const>-1$ we define the (1,1)-tensors $f$ and $Q$ by
\begin{align*}
& f e_1 = \sqrt{1+c}\,e_{n+1},\ \
  f e_{n+1} = -\sqrt{1+c}\,e_1,\
   \ldots\
  f e_n =\sqrt{1+c}\,e_{2n},\ \
  f e_{2n} = -\sqrt{1+c}\,e_n,\ \
  f \xi =    0, \\
& Q e_1 = (1+c)\,e_1,\
  \ldots, \
  Q e_{2n} = (1+c)\,e_{2n},\quad
  Q\,\xi = \xi.
\end{align*}
Since \eqref{2.1}--\eqref{2.2} are valid, $M(f,Q,\xi,\eta,g)$ is a weak almost contact metric manifold when $c\ne0$ (an almost contact metric manifold when $c=0$). We can deduce that
\[
 [e_i,e_j]=0,\quad [e_i, \xi] = \beta e_i,\quad (1\le i,j\le 2n).
\]
Thus, ${\cal D}={\rm Span}(e_1,\ldots,e_{2n})$ is an involutive distribution.
The Levi-Civita connection $\nabla$ of $g$ on the vector fields $\{e_i\}$ is given by
$2\,g(\nabla_{e_i}e_j, e_k)= g(e_k,[e_i,e_j])-g(e_i,[e_j,e_k])-g(e_j,[e_i,e_k])$, and we~get
\begin{align*}
 \nabla_{e_i}e_j = -\delta_{ij}\,\beta\,\xi,\quad
 \nabla_{e_j}\,\xi = \beta e_j,\quad
 \nabla_{\xi}\,\xi = 0\quad (1\le i\le 2n+1,\ 1\le j\le 2n).
\end{align*}
Since \eqref{2.3-patra} is satisfied, $M(f,Q,\xi,\eta,g)$ is a weak $\beta$-KM when $c\ne0$ ($\beta$-KM when $c=0$).
Thus, the distribution ${\cal D}$ defines a totally umbilical foliation and $e_{2n+1}$ defines a geodesic foliation.
The second fundamental form of ${\cal D}$ is $h(e_i,e_j)=-\beta\delta_{ij}\,\xi\ (1\le i,j\le 2\,n)$
and the mean curvature vector of ${\cal D}$ is $H=-\beta\,\xi$.
The~non-vanishing components of the curvature tensor are
\begin{align*}
 & R_{\,e_i,e_j}e_j = - R_{\,e_j,e_i}e_j = - \beta^2 e_i,\quad
 R_{\,e_i,\,\xi}\,\xi = -\beta^2 e_i,\quad R_{\,\xi,\,e_j}e_j = -\beta^2\,\xi\quad (1\le i,j\le 2n).
\end{align*}
From the above results we have the Ricci tensor
\begin{align*}
 \Ric^\sharp e_i = -2\,n\beta^2 e_i,\quad \Ric^\sharp \xi = -2\,n\beta^2 \xi\quad (1\le i\le 2n).
\end{align*}
Thus, $g$ is an Einstein metric, $\Ric = -2\,n\beta^2 g$, of the scalar curvature $r=-2\,n(2\,n+1)\beta^2$.
Next we~find
\[
 \Ric^\ast(e_i,e_j)=-(1+c)\delta_{ij}\beta^2,\quad
 \Ric^\ast(\xi,\,\xi)=0,\quad
 \Ric^\ast(e_i,\,\xi)=0\quad (1\le i,j\le 2n).
\]
Hence $g$ is a $\ast$-$\eta$-Einstein metric: $\Ric^\ast=-(1+c)\beta^2\{g-\eta\otimes\eta\}$ of $\ast$-scalar curvature $r^*=-2\,n(1+c)\beta^2$.

We~can justify that $\pounds_\xi\,g=2\,\beta\{g-\eta\otimes\eta\}$ is valid.
Thus \eqref{E-sol-ast-eta} is valid for $V=\xi$ and $\lambda=-\mu=\beta-(1+c)\beta^2$.
We conclude that $(g,\xi)$ represents a $\ast$-$\eta$-RS on $M$.
\end{example}

\section{Main Results}
\label{sec:04}

In this section, we study the interaction of the $\ast$-$\eta$-RS with the weak $\beta$-KM.
In~Proposition~\ref{T-lambda0}, using auxiliary Lemma~\ref{lem3.2}, we~find the sum of the soliton constants,
and in Theorems 2-5 we present new characteristics of Einstein metrics.

%%%%%

Adapting the method of Lemma 4 in \cite{rov-126}, see also Lemma 3.2 in \cite{ven-2022}, we obtain the following.

\begin{lemma}\label{lem3.2}
Let a weak $\beta$-KM $M^{2n+1}(\varphi,Q,\xi,\eta,g)$ be a $\ast$-$\eta$-RS with the potential vector field $V$, then
\begin{align}\label{3.9}
\notag
 & (\pounds_{V} R)_{X,Y}\,\xi = 2\,\beta\{(\nabla_X\Ric^\sharp)Q Y - (\nabla_Y\Ric^\sharp)Q X\} \\
\notag
 & +2\,\beta^2 \{\eta(X)\Ric^\sharp Y-\eta(Y)\Ric^\sharp X\}
 + 4\,\beta^2\{\eta(X)\Ric^\sharp \widetilde QY - \eta(Y)\Ric^\sharp \widetilde QX\} \\
 & + 4\,n\,\beta^4\{\eta(X) Y - \eta(Y) X\} + 8\,n\beta^4\{\eta(X)\widetilde Q Y - \eta(Y)\widetilde Q X\}
\end{align}
for all $X,Y\in\mathfrak{X}_M$.
Using $Y=\xi$ and \eqref{3.1A-patra} in \eqref{3.9} gives
\begin{align}\label{E-3.18}
 (\pounds_V R)_{X,\,\xi}\,\xi = 0 \quad (X\in\mathfrak{X}_M).
\end{align}
\end{lemma}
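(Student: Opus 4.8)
The plan is to combine the $\ast$-$\eta$-RS equation with the two classical identities for the Lie derivative of the Levi-Civita connection. Write $h:=\pounds_V g$ and let $(\pounds_V\nabla)$ denote the symmetric $(1,2)$-tensor $(X,Y)\mapsto\pounds_V(\nabla_XY)-\nabla_X(\pounds_VY)-\nabla_{[V,X]}Y$; then
\begin{align*}
 2\,g\big((\pounds_V\nabla)(X,Y),Z\big) &= (\nabla_X h)(Y,Z)+(\nabla_Y h)(X,Z)-(\nabla_Z h)(X,Y),\\
 (\pounds_V R)_{X,Y}Z &= (\nabla_X(\pounds_V\nabla))(Y,Z)-(\nabla_Y(\pounds_V\nabla))(X,Z).
\end{align*}
From \eqref{E-sol-eta2} (which encodes the soliton equation on a weak $\beta$-KM via \eqref{E-ast-Ric} and $[\Ric^\sharp,Q]=0$) one reads off
\[
 h=2\lambda\,g-2\,g(Q\Ric^\sharp\cdot,\cdot)-2(2n-1)\beta^{2}g(Q\cdot,\cdot)+2(\mu-\beta^{2})\,\eta\otimes\eta;
\]
putting one argument equal to $\xi$ and using $Q\xi=\xi$ and $\Ric^\sharp\xi=-2n\beta^{2}\xi$ (see \eqref{2.5-patra}) this simplifies to $h(X,\xi)=2(\lambda+\mu)\,\eta(X)$.

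Next I would compute the $(1,1)$-tensor $T:=(\pounds_V\nabla)(\cdot,\xi)$. Using \eqref{2.3b}, \eqref{2.3c} and $h(\cdot,\xi)=2(\lambda+\mu)\eta$, a short calculation gives $(\nabla_Y h)(\xi,Z)=(\nabla_Z h)(Y,\xi)=2(\lambda+\mu)\beta\,g(Y,Z)-\beta\,h(Y,Z)$, so these two terms cancel in the first identity above and $2\,g(TY,Z)=(\nabla_\xi h)(Y,Z)$. Since $\nabla_\xi Q=0$, $\nabla_\xi\eta=0$, and $(\nabla_\xi\Ric^\sharp)X=-2\beta\Ric^\sharp X-4n\beta^{3}X$ by \eqref{E-L-02b}, substituting into $\nabla_\xi h$ yields $(\nabla_\xi h)(Y,Z)=4\beta\,g(Q\Ric^\sharp Y,Z)+8n\beta^{3}g(QY,Z)$, i.e.
\[
 T=2\beta\,Q\Ric^\sharp+4n\beta^{3}Q .
\]

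Now I would put $Z=\xi$ in the second identity. Expanding $(\nabla_X(\pounds_V\nabla))(Y,\xi)=(\nabla_X T)Y-(\pounds_V\nabla)(Y,\nabla_X\xi)$, substituting $\nabla_X\xi=\beta(X-\eta(X)\xi)$ from \eqref{2.3b}, and using the symmetry of $(\pounds_V\nabla)$ to cancel the $(\pounds_V\nabla)(\cdot,\cdot)$ terms that are antisymmetrized in $X,Y$, one obtains
\[
 (\pounds_V R)_{X,Y}\xi=(\nabla_X T)Y-(\nabla_Y T)X+\beta\big\{\eta(X)\,TY-\eta(Y)\,TX\big\}.
\]
Here $(\nabla_X T)Y$ is computed from $T=2\beta Q\Ric^\sharp+4n\beta^{3}Q$ using $\nabla Q$ from \eqref{E-nS-10b}, $[\Ric^\sharp,Q]=0$ and $\eta(\Ric^\sharp Y)=-2n\beta^{2}\eta(Y)$; the $\xi$-valued pieces produced by $\nabla Q$ are symmetric in the two free arguments and drop out under antisymmetrization, leaving $(\nabla_X T)Y-(\nabla_Y T)X=2\beta\{Q(\nabla_X\Ric^\sharp)Y-Q(\nabla_Y\Ric^\sharp)X\}$. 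The one genuinely delicate point, and the main obstacle, is that to reach the stated form one must rewrite $Q(\nabla_X\Ric^\sharp)Y$ as $(\nabla_X\Ric^\sharp)QY+[\Ric^\sharp,\nabla_X Q]Y$ (by differentiating $Q\Ric^\sharp=\Ric^\sharp Q$) and then evaluate the commutator $[\Ric^\sharp,\nabla_X Q]$ explicitly from \eqref{E-nS-10b}. Combining this commutator with the zeroth-order part $\beta\{\eta(X)TY-\eta(Y)TX\}$ and using $Q={\rm id}+\widetilde Q$ with $[\widetilde Q,\Ric^\sharp]=0$ assembles precisely the $\beta^{2}\Ric^\sharp$, $\beta^{2}\Ric^\sharp\widetilde Q$, $\beta^{4}$ and $\beta^{4}\widetilde Q$ terms of \eqref{3.9}; the rest is coefficient bookkeeping.

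Finally, \eqref{E-3.18} is obtained by setting $Y=\xi$ in \eqref{3.9} and simplifying with $\eta(\xi)=1$, $Q\xi=\xi$, $\widetilde Q\xi=0$, $\Ric^\sharp\xi=-2n\beta^{2}\xi$, and the derivative identities $(\nabla_X\Ric^\sharp)\xi=-\beta\Ric^\sharp X-2n\beta^{3}X$ (see \eqref{E-L-02a}), $(\nabla_\xi\Ric^\sharp)X=-2\beta\Ric^\sharp X-4n\beta^{3}X$ (see \eqref{E-L-02b}), together with \eqref{3.1A-patra} where a trace intervenes; a short computation shows the $\Ric^\sharp X$, $\Ric^\sharp\widetilde Q X$, $X$, $\widetilde Q X$ and $\eta(X)\xi$ contributions cancel in pairs, so that $(\pounds_V R)_{X,\xi}\xi=0$.
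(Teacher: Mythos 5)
Your proposal is correct and follows essentially the same route as the paper: both rest on the two Yano identities, first extract $(\pounds_V\nabla)(\cdot,\xi)=2\beta\,Q\Ric^\sharp+4n\beta^{3}Q$ (the paper's intermediate formula), then differentiate, antisymmetrize in $X,Y$, and commute $Q$ past $\nabla\Ric^\sharp$ using $\nabla Q$ to assemble the stated coefficients. Your shortcut for $T$ (observing that two of the three cyclic terms cancel so that $2g(TY,Z)=(\nabla_\xi\pounds_Vg)(Y,Z)$) is a minor streamlining of the paper's substitution $Y=\xi$ into the full cyclic formula, and your verification of \eqref{E-3.18} matches the paper's.
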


\begin{proof}
Taking the covariant derivative of \eqref{E-sol-eta2} along $Z\in\mathfrak{X}_M$ and using \eqref{2.3-patra}, we~get
\begin{align}\label{3.3A-patra}
\nonumber
 (\nabla_Z(\pounds_{V}\,g))(X,Y) &= -2(\nabla_{Z}\,{\rm Ric})(X,QY)
 -2\Ric(X,(\nabla_Z\,Q)Y)-2(2n-1)\beta^2 g(X,(\nabla_Z\,Q)Y)\\
 & +\,2\,\beta(\mu-\beta^2)\{g(X, Z)\,\eta(Y) + g(Y,Z)\,\eta(X) -2\,\eta(X)\,\eta(Y)\,\eta(Z)\}
\end{align}
for all $X,Y\in\mathfrak{X}_M$.
Recall the commutation formula with the tensor $\pounds_{V}\nabla$, see \cite[p.~23]{yano1970integral},
\begin{align}\label{2.6}
 (\pounds_{V}(\nabla_{Z}\,g) - \nabla_{Z}(\pounds_{V}\,g) - \nabla_{[V,Z]}\,g)(X,Y) = -g((\pounds_{V}\nabla)(Z,X),Y) -g((\pounds_{V}\nabla)(Z,Y),X).
\end{align}
Since Riemannian metric is parallel, $\nabla g=0$, it follows from \eqref{2.6} that
\begin{equation}\label{3.4}
 (\nabla_{Z}(\pounds_{V}\,g))(X,Y) = g((\pounds_{V}\nabla)(Z,X),Y) + g((\pounds_{V}\nabla)(Z,Y),X).
\end{equation}
Since $(\pounds_{V}\nabla)(X,Y)$ is symmetric, from \eqref{3.4} we get
\begin{align}\label{3.5cycle}
 2\,g((\pounds_{V}\nabla)(X,Y), Z) =
 (\nabla_X\,\pounds_{V}\,g)(Y,Z)
 +(\nabla_Y\,\pounds_{V}\,g)(Z,X)
 -(\nabla_Z\,\pounds_{V}\,g)(X,Y) .
\end{align}
According to \eqref{3.5cycle} and \eqref{3.3A-patra}, we obtain
\begin{align}\label{3.6}
\nonumber
 g((\pounds_{V}\nabla)(X,Y),Z) &= (\nabla_{Z}\,{\rm Ric})(X,QY) -(\nabla_{X}\,{\rm Ric})(Y,QZ) -(\nabla_{Y}\,{\rm Ric})(Z,QX)\\
 & +2\,\beta(\mu-\beta^2)\,\eta(Z)\{g(X, Y) -\eta(X)\,\eta(Y) \} +\varepsilon(X,Y,Z),
\end{align}
where
\begin{align*}
 & \varepsilon(X,Y,Z) = \Ric(X,(\nabla_Z\,Q)Y) -\Ric(Z,(\nabla_Y\,Q)X) -\Ric(Y,(\nabla_X\,Q)Z) \\
 & +(2n-1)\beta^2 g(X,(\nabla_Z\,Q)Y) -(2n-1)\beta^2 g(Z,(\nabla_Y\,Q)X) -(2n-1)\beta^2 g(Y,(\nabla_X\,Q)Z).
\end{align*}
Substituting $Y=\xi$ in \eqref{3.6} yields:
\begin{align}\label{E-Lem3-a}
 g((\pounds_{V}\nabla)(X,\xi),Z) =(\nabla_{Z}\,{\rm Ric})(X,\xi)
 -(\nabla_{X}\,{\rm Ric})(\xi,QZ)
 -(\nabla_{\xi}\,{\rm Ric})(Z,QX) + \varepsilon(X,\xi,Z) ,
\end{align}
where, in view of \eqref{E-nS-10b}, we have
\begin{align*}
 \varepsilon(X,\xi,Z) = - \beta\Ric(\widetilde Q X, Z) - 2\,n\beta^3 g(\widetilde Q X, Z).
\end{align*}
Applying \eqref{E-L-02a} and \eqref{E-L-02b} to \eqref{E-Lem3-a}, we obtain
\begin{align}\label{3.7}
 (\pounds_{V} \nabla)(X,\xi) = 2\,\beta\Ric^\sharp Q X + 4\,n\beta^3 Q X
 \quad (X\in\mathfrak{X}_M).
\end{align}
Next, using (\ref{2.3b}) in the covariant derivative of (\ref{3.7}) for $Y\in\mathfrak{X}_M$, yields
\begin{align*}
 &(\nabla_Y(\pounds_V\nabla))(X,\xi) =
 - \beta(\pounds_V\nabla)(X,Y)
 + 2\,\beta(\nabla_Y\Ric^\sharp)Q X
 + 2\,\beta^2\eta(Y)\Ric^\sharp QX \\
 & -2\,\beta^2\eta(X)\Ric^\sharp\widetilde Q Y
 + 4\,n\beta^4\eta(Y)QX
 -4\,n\beta^4\eta(X)\widetilde Q Y
\end{align*}
for any $X\in\mathfrak{X}_M$.
Plugging this in the following formula with $Z=\xi$ (see \cite{yano1970integral}, p.~23):
\begin{align*}%\label{3.8}
 (\pounds_{V} R)_{X,Y} Z = (\nabla_{X}(\pounds_{V}\nabla))(Y,Z) -(\nabla_{Y}(\pounds_{V}\nabla))(X,Z),
\end{align*}
and using symmetry of $(\pounds_V\nabla)(X,Y)$, we deduce \eqref{3.9}.
Substituting $Y=\xi$ in \eqref{3.9} and using
\eqref{2.5-patra}, \eqref{E-L-02b} and \eqref{E-L-02a}, gives \eqref{E-3.18}.
\end{proof}

The following result generalizes Theorem~3.1 of \cite{ven-2019}  (see also Lemma 3.7 of \cite{ven-2022}).

\begin{proposition}\label{T-lambda0}
Let a weak $\beta$-KM be a $\ast$-$\eta$-RS.
Then $(\pounds_V\,\eta)(\xi) = \eta(\pounds_V\,\xi) = 0$ is true
and the soliton constants satisfy
\begin{align}\label{E-3.21b}
 \lambda +\mu = 0.
\end{align}
The $\ast$-$\eta$-RS is shrinking, steady, or expanding
if $\mu$ is negative, zero, or positive, respectively.
\end{proposition}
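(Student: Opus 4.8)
The plan is to extract the soliton constant relation from the $\ast$-$\eta$-RS equation \eqref{E-sol-eta2} by specializing to the characteristic direction $\xi$ and exploiting the formulas for a weak $\beta$-KM established in Section~\ref{sec:02}. First I would compute $(\pounds_V g)(\xi,\xi)$. Writing $\pounds_\xi$-type identities: since $\eta(X) = g(X,\xi)$, we have $(\pounds_V\eta)(\xi) = (\pounds_V g)(\xi,\xi) + g(\xi, \pounds_V\xi)$ on one hand, while $(\pounds_V g)(\xi,\xi) = 2\,g(\nabla_\xi V,\xi)$ by \eqref{3.3C}, and $g(\xi,\pounds_V\xi) = g(\xi,[V,\xi]) = g(\xi,\nabla_V\xi - \nabla_\xi V) = \beta\,g(\xi, V - \eta(V)\xi) - g(\xi,\nabla_\xi V) = -g(\xi,\nabla_\xi V)$, using \eqref{2.3b}. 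Hence $(\pounds_V g)(\xi,\xi) = -2\,g(\xi,\pounds_V\xi) = 2\,(\pounds_V g)(\xi,\xi) - 2(\pounds_V\eta)(\xi)$... so I should be careful; the cleaner route is to directly observe $(\pounds_V\eta)(\xi) = \eta(\pounds_V\xi)$ from differentiating $\eta(\xi)=1$, i.e. $0 = \pounds_V(\eta(\xi)) = (\pounds_V\eta)(\xi) + \eta(\pounds_V\xi)$, and separately that $(\pounds_V g)(\xi,\xi) = 2\,g(\nabla_\xi V,\xi) = -2\,g(V,\nabla_\xi\xi) + 2\,\xi(g(V,\xi))$; since $\nabla_\xi\xi=0$ this is $2\,\xi(\eta(V))$, which via the curvature-flow computations of Lemma~\ref{lem3.2} combined with \eqref{E-3.18} must be forced to vanish.

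More precisely, the intended mechanism is: apply the $\ast$-$\eta$-RS equation \eqref{E-sol-eta2} with $X=Y=\xi$. Using $Q\xi=\xi$ from \eqref{2.1}, $\eta(\xi)=1$, and $\Ric(\xi,\xi) = -2n\beta^2$ from \eqref{2.5-patra}, the right-hand side becomes $\lambda - (2n-1)\beta^2 + \mu - \beta^2 = \lambda + \mu - 2n\beta^2$, while the left-hand side is $(1/2)(\pounds_V g)(\xi,\xi) + \Ric(\xi,\xi) = (1/2)(\pounds_V g)(\xi,\xi) - 2n\beta^2$. Cancelling $-2n\beta^2$ gives $(1/2)(\pounds_V g)(\xi,\xi) = \lambda+\mu$. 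So the whole proposition reduces to showing $(\pounds_V g)(\xi,\xi) = 0$, equivalently $\xi(\eta(V)) = 0$ (modulo the $\nabla_\xi\xi=0$ simplification), and to showing $(\pounds_V\eta)(\xi)=0$.

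For the vanishing of $(\pounds_V g)(\xi,\xi)$ I would invoke \eqref{E-3.18} from Lemma~\ref{lem3.2}: $(\pounds_V R)_{X,\xi}\xi = 0$. The standard trick is to contract a Lie-derivative-of-curvature identity against $\eta$ or to take a further trace. Specifically, from $(\pounds_V R)_{X,\xi}\xi=0$ one derives, by the Leibniz rule applied to $\Ric(X,\xi) = \tr\{Z\mapsto R_{Z,X}\xi\}$ and to $\eta$, a relation of the form $(\pounds_V\Ric)(X,\xi) = \text{(known terms)}$; pairing this with the RS equation \eqref{E-sol-eta2} and its Lie derivative, together with \eqref{2.5-patra}–\eqref{E-L-02a}, pins down $\pounds_V g$ along $\xi$. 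Alternatively—and this is probably the cleanest—take $X = Y = \xi$ directly in \eqref{3.9} (before specializing), or differentiate the scalar equation $(1/2)(\pounds_V g)(\xi,\xi) = \lambda+\mu$ along $\xi$ using that $\lambda,\mu$ are constants to get $\xi((\pounds_V g)(\xi,\xi)) = 0$, then combine with \eqref{E-3.18} contracted appropriately to conclude $(\pounds_V g)(\xi,\xi)$ is actually zero rather than merely $\xi$-constant.

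The main obstacle I anticipate is precisely this last step: upgrading ``$(\pounds_V g)(\xi,\xi)$ is constant along $\xi$'' (or some contracted consequence of \eqref{E-3.18}) to ``$(\pounds_V g)(\xi,\xi) = 0$''. This likely requires carefully taking the trace of \eqref{3.9} in $X,Y$ after suitable substitutions, using \eqref{2.5-patra} and \eqref{3.1A-patra} to kill the $\Ric^\sharp$ and scalar-curvature terms, and using that $\tr\widetilde Q = \tr Q - (2n+1)$ is constant (from the Lemma in Section~\ref{sec:02}). Once $(\pounds_V g)(\xi,\xi)=0$ is in hand, $\eqref{E-3.21b}$ follows immediately from the $X=Y=\xi$ specialization above, the sign dichotomy for shrinking/steady/expanding follows from $\lambda = -\mu$ and the definition, and $(\pounds_V\eta)(\xi) = \eta(\pounds_V\xi) = -(1/2)(\pounds_V g)(\xi,\xi) = 0$ closes the remaining assertions. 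I would also double-check the identity $(\pounds_V\eta)(\xi) = \eta(\pounds_V\xi)$ via $0 = \pounds_V(\eta(\xi))$ together with $(\pounds_V g)(\xi,\xi) = (\pounds_V\eta)(\xi) + \eta(\pounds_V\xi)$, which forces both to vanish once their sum does.
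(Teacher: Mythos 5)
Your reduction is the same as the paper's: specializing the $\ast$-$\eta$-RS equation \eqref{E-sol-eta2} at $Y=\xi$ (using $Q\xi=\xi$ and \eqref{2.5-patra}) gives $(\pounds_V g)(X,\xi)=2(\lambda+\mu)\eta(X)$, and since $(\pounds_V g)(\xi,\xi)=-2\,\eta(\pounds_V\xi)$ the whole statement hinges on proving $\eta(\pounds_V\xi)=0$. You correctly identify \eqref{E-3.18} as the tool for this, but you do not actually execute the decisive step, and two of the three routes you float for it are dead ends: substituting $X=Y=\xi$ into \eqref{3.9} gives $0=0$ because every term there is antisymmetric in $X,Y$; and differentiating $(1/2)(\pounds_V g)(\xi,\xi)=\lambda+\mu$ along $\xi$ only restates that a constant has zero derivative. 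What the paper actually does is Lie-differentiate the pointwise identity $R_{X,\xi}\,\xi=\beta^2\{\eta(X)\xi-X\}$ (from \eqref{2.4}) along $V$, using \eqref{E-3.19a} to handle the terms where $\pounds_V\xi$ lands in a curvature slot; setting the resulting $(\pounds_V R)_{X,\xi}\,\xi$ to zero by \eqref{E-3.18} yields $2\,\eta(\pounds_V\xi)X-g(X,\pounds_V\xi)\,\xi+(\pounds_V\eta)(X)\,\xi=0$, whose ${\cal D}$- and ${\cal D}^\perp$-components give $\eta(\pounds_V\xi)=0$ and $(\pounds_V\eta)(X)=g(X,\pounds_V\xi)$ (the latter is reused in later theorems). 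Your third suggestion -- contract $(\pounds_V R)_{X,\xi}\,\xi=0$ over $X$ to get $(\pounds_V\Ric)(\xi,\xi)=0$ and then apply Leibniz to the constant $\Ric(\xi,\xi)=-2n\beta^2$, which forces $-4n\beta^2\eta(\pounds_V\xi)=0$ -- would in fact work and is a legitimate alternative, but it is only gestured at, not carried out.

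Two smaller points. First, the identity you propose to ``double-check,'' $(\pounds_V g)(\xi,\xi)=(\pounds_V\eta)(\xi)+\eta(\pounds_V\xi)$, has a sign error: from $(\pounds_V\eta)(X)=(\pounds_V g)(X,\xi)+g(X,\pounds_V\xi)$ one gets $(\pounds_V g)(\xi,\xi)=(\pounds_V\eta)(\xi)-\eta(\pounds_V\xi)$; with your sign the left side would vanish identically by $0=\pounds_V(\eta(\xi))$, which would make the proposition trivial and is false in general. Second, the asserted equality $(\pounds_V\eta)(\xi)=\eta(\pounds_V\xi)$ does not follow from $\pounds_V(\eta(\xi))=0$ alone (that gives the opposite sign); in the paper it comes from setting $X=\xi$ in $(\pounds_V\eta)(X)=g(X,\pounds_V\xi)$, after which both quantities vanish. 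So the skeleton of your argument is right and matches the paper, but the core computation establishing $\eta(\pounds_V\xi)=0$ is missing.
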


\begin{proof}
Using \eqref{2.4} and symmetries of $R$, we derive
\[
 g(R_{X,\xi}Z, W)=g(R_{W,Z}\xi, X)=\beta^2\{g(X,Z)g(\xi,W)-\eta(Z)g(X,W)\}.
\]
Hence
\begin{align}\label{E-3.19a}
 R_{X,\xi}Z = \beta^2\{g(X,Z)\xi - \eta(Z) X\}.
\end{align}
Taking the Lie derivative along $V$ of $R_{X,\xi}\,\xi= \beta^2\{\eta(X)\xi - X\}$, see \eqref{2.4} with $Y=\xi$,
and using \eqref{E-3.19a}, \eqref{3.3B} and \eqref{2.4} gives
\begin{align*}
 (\pounds_V R)_{X,\xi}\,\xi = 2\,\beta^2\eta(\pounds_V\,\xi)X - \beta^2 g(X,\pounds_V\,\xi)\,\xi + \beta^2(\pounds_V\,\eta)(X)\,\xi ,
\end{align*}
where $\pounds_V\,\eta$ is given in \eqref{3.3B}.
In view of \eqref{E-3.18}, the above equation divided by $\beta^2$ becomes
\begin{align}\label{E-3.19}
 2\,\eta(\pounds_V\,\xi)X - g(X,\pounds_V\,\xi)\,\xi + (\pounds_V\,\eta)(X)\,\xi = 0 .
\end{align}
Taking ${\cal D}$- and ${\cal D}^\bot$- components of \eqref{E-3.19} gives two equalities
\begin{align}\label{E-3.19X}
\notag
 \eta(\pounds_V\,\xi) & = 0 ,\\
 (\pounds_V\,\eta)(X) & = g(X,\pounds_V\,\xi) \quad (X\in\mathfrak{X}_M).
\end{align}
%%%%%
Using $X=\xi$ in the second equality of \eqref{E-3.19X} yields $(\pounds_V\,\eta)(\xi)=\eta(\pounds_V\,\xi)$.
By the above, we conclude that
\begin{align}\label{E-AB}
 (\pounds_V\,\eta)(\xi) = \eta(\pounds_V\,\xi) = 0.
\end{align}
Further,~using \eqref{2.5-patra}, we write \eqref{E-sol-eta2} with $Y=\xi$ as
\begin{align}\label{E-3.20}
 (\pounds_V\,g)(X,\xi) = 2\,\big(\lambda+\mu \big)\,\eta(X).
\end{align}
Using the equality
\[
 (\pounds_V\,g)(\xi,\xi)=-2\,g(\xi, \pounds_V\,\xi)=-2\,\eta(\pounds_V\,\xi)
\]
\eqref{E-AB} and $(\pounds_V\,g)(\xi,\xi) = 2\,\big(\lambda+\mu \big)$, see \eqref{E-3.20} with $X=\xi$, yields \eqref{E-3.21b}.
\end{proof}

\begin{corollary}%\label{C-lambda0}
Let a weak $\beta$-KM be a $\ast$-RS, then the soliton constant is $\lambda = 0$.
\end{corollary}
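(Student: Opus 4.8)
The plan is to specialize the $\ast$-$\eta$-RS result of Proposition~\ref{T-lambda0} to the case $\mu=0$. Recall that a $\ast$-RS is by definition \eqref{E-sol-ast}, which is precisely \eqref{E-sol-ast-eta} with $\mu=0$; hence a weak $\beta$-KM carrying a $\ast$-RS is a fortiori a weak $\beta$-KM carrying a $\ast$-$\eta$-RS with $\mu\equiv 0$. Therefore Proposition~\ref{T-lambda0} applies verbatim.

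First I would invoke Proposition~\ref{T-lambda0} to obtain the relation $\lambda+\mu=0$ among the soliton constants, see \eqref{E-3.21b}. Then, substituting $\mu=0$ into this identity immediately yields $\lambda=0$, which is exactly the assertion of the corollary. No further computation is needed, since all the geometric input (the curvature identities \eqref{2.4}, \eqref{E-3.19a}, the Lie-derivative computation of $(\pounds_V R)_{X,\xi}\,\xi$, and the vanishing \eqref{E-3.18} from Lemma~\ref{lem3.2}) has already been absorbed into the proof of Proposition~\ref{T-lambda0}.

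There is essentially no obstacle here: the corollary is a direct corollary in the literal sense, obtained by setting one parameter to zero. The only point worth a sentence of care is confirming that the hypotheses of Proposition~\ref{T-lambda0} — namely that $\beta=\mathrm{const}\ne 0$ (a standing assumption in this section) and that the structure is a $\ast$-$\eta$-RS — are all met by a $\ast$-RS, which they are since $\eqref{E-sol-ast}$ is the $\mu=0$ instance of $\eqref{E-sol-ast-eta}$. One could also remark, as a consistency check, that the conclusions $(\pounds_V\eta)(\xi)=\eta(\pounds_V\xi)=0$ of Proposition~\ref{T-lambda0} continue to hold, and that with $\lambda=0$ the $\ast$-RS is automatically steady, but this is not required for the proof.

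\begin{proof}
A $\ast$-RS is a $\ast$-$\eta$-RS with $\mu=0$, so Proposition~\ref{T-lambda0} applies and gives $\lambda+\mu=0$. Setting $\mu=0$ yields $\lambda=0$.
\end{proof}
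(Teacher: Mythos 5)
Your proposal is correct and matches the paper's (implicit) reasoning exactly: the corollary is the $\mu=0$ specialization of Proposition~\ref{T-lambda0}, whose identity $\lambda+\mu=0$ immediately gives $\lambda=0$. Nothing further is needed.
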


The following result generalizes Theorem~3.2 of \cite{ven-2019} (see also Theorem~3.3 of \cite{ven-2022}).

\begin{theorem}%\label{T-002}
Let $M^{2n+1}(\varphi,Q,\xi,\eta,g)$ be an $\eta$-Einstein weak $\beta$-KM.
If~$(g,V)$ represents a $\ast$-$\eta$-RS, then $(M,g)$ is an Einstein manifold of scalar curvature ${r}=-2\,n(2\,n+1)\beta^2$.
\end{theorem}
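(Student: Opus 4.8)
The plan is to exploit the $\eta$-Einstein hypothesis together with the soliton equation to pin down the two free functions $\lambda+(2n-1)\beta^2$ and $\mu$, and thereby force the soliton vector field to be Killing. Since $M$ is $\eta$-Einstein, formula \eqref{E-42-proc} gives $\Ric^\sharp X=(\tfrac{r}{2n}+\beta^2)X-(\tfrac{r}{2n}+(2n+1)\beta^2)\eta(X)\xi$; substituting $Y=\xi$ into \eqref{2.5-patra} already tells us $\Ric^\sharp\xi=-2n\beta^2\xi$, which is consistent. First I would plug \eqref{E-42-proc} into the weak $\beta$-KM form of the soliton equation \eqref{E-sol-eta2}. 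Because $Q$ commutes with $\Ric^\sharp$ and the $\eta$-Einstein tensor is a combination of $g$ and $\eta\otimes\eta$, the term $\Ric(X,QY)$ becomes $(\tfrac{r}{2n}+\beta^2)g(X,QY)-(\tfrac{r}{2n}+(2n+1)\beta^2)\eta(X)\eta(Y)$ (using $Q\xi=\xi$). This turns \eqref{E-sol-eta2} into an identity of the schematic form $\tfrac12(\pounds_V g)(X,Y)=a\,g(X,Y)+b\,g(X,QY)+c\,\eta(X)\eta(Y)$ with $a,b,c$ explicit constants in $r,\beta,\lambda,\mu$.

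Next I would apply the constraint $\lambda+\mu=0$ from Proposition~\ref{T-lambda0}, and use \eqref{E-3.20}, i.e.\ $(\pounds_V g)(X,\xi)=2(\lambda+\mu)\eta(X)=0$. Setting $Y=\xi$ in the schematic identity and comparing with $(\pounds_V g)(X,\xi)=0$ forces the coefficient relation $a+b+c=0$ (using $Q\xi=\xi$, $\eta(\xi)=1$). That is one equation. To get a second, I would take the $g$-trace of the schematic identity: $\tfrac12\,\mathrm{div}\,V\cdot 2=(2n+1)a+b\,\tr Q+c$, but more usefully I would contract the original soliton equation \eqref{E-sol-ast-eta}: tracing $\Ric^\ast$ via \eqref{E-ast-scal} together with the $\eta$-Einstein value of $\tr(Q\Ric^\sharp)$ gives $r^\ast$ explicitly, and tracing the left side relates it to $\mathrm{div}\,V$, $\lambda$, $\mu$. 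Combining the trace relation, $\lambda+\mu=0$, and the pointwise relation from $Y=\xi$ should determine $\lambda$, $\mu$ and also $r$; I expect this to yield $r=-2n(2n+1)\beta^2$, the value already flagged in \eqref{3.1A-patra} as the ``critical'' scalar curvature (indeed $\xi(r)=0$ there, and on a connected $\eta$-Einstein weak $\beta$-KM one expects $r$ constant).

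With $r=-2n(2n+1)\beta^2$ substituted back, \eqref{E-42-proc} collapses: $\tfrac{r}{2n}+\beta^2=-2n\beta^2$ and $\tfrac{r}{2n}+(2n+1)\beta^2=0$, so $\Ric^\sharp X=-2n\beta^2 X$, i.e.\ $g$ is Einstein with $\Ric=-2n\beta^2 g$ (equivalently the $\eta\otimes\eta$ part vanishes). This is exactly the claimed conclusion, and its scalar curvature is $-2n(2n+1)\beta^2$ as required. As a byproduct the schematic identity becomes $\tfrac12(\pounds_V g)(X,Y)=(\lambda+(2n-1)\beta^2-2n\beta^2)g(X,Y)+(2n-1)\beta^2\,g(X,\widetilde QY)+\dots$, and the earlier relations pin $\lambda=-\mu$ to a definite constant, consistent with Example~\ref{Ex-ast-RS}.

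The main obstacle I anticipate is bookkeeping rather than conceptual: correctly handling the $Q$ (equivalently $\widetilde Q$) terms in \eqref{E-sol-eta2} and \eqref{E-ast-scal}, since unlike the classical Kenmotsu case $Q\neq\mathrm{id}$, so $g(X,QY)$ and $\tr Q$ genuinely differ from $g(X,Y)$ and $2n+1$. The key is that $Q\xi=\xi$ kills all the $\widetilde Q$-contributions whenever one argument is $\xi$, which is why evaluating at $Y=\xi$ is clean; the subtlety is ensuring that the off-$\xi$ part of the argument closes up and that $r$ is forced to its critical value (this likely needs connectedness of $M$, together with \eqref{3.1A-patra} to rule out non-constant $r$). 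Once $r$ is fixed, everything reduces to linear algebra in the constants.
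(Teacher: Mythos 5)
Your overall strategy --- substitute the $\eta$-Einstein form \eqref{E-42-proc} into \eqref{E-sol-eta2} and then determine $r$, $\lambda$, $\mu$ by evaluating at $Y=\xi$ and taking traces --- has a genuine gap at its central step: nothing in that algebra forces $r=-2\,n(2\,n+1)\beta^2$. Evaluating the resulting identity at $Y=\xi$ only reproduces $(\pounds_V g)(X,\xi)=2(\lambda+\mu)\,\eta(X)$, i.e.\ the relation $\lambda+\mu=0$ that you already have from Proposition~\ref{T-lambda0}; it yields no constraint on $r$, because $Q\xi=\xi$ and $\Ric^\sharp\xi=-2n\beta^2\xi$ make the $\xi$-slot of the $\eta$-Einstein tensor independent of $r$. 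Taking the $g$-trace introduces $\operatorname{div}V$, an unknown function, so the trace identity is one equation containing the new unknown $\operatorname{div}V$ and again cannot pin down $r$. Finally, your appeal to \eqref{3.1A-patra} presupposes that $r$ is constant (you write ``one expects $r$ constant''), but that is precisely what must be proved: the $\eta$-Einstein hypothesis only gives $X(r)=\xi(r)\,\eta(X)$ (constancy along the leaves of $\mathcal{D}$), and \eqref{3.1A-patra} is an ODE along the flow of $\xi$ that admits non-constant solutions, so $r=-2\,n(2\,n+1)\beta^2$ does not follow without further input.

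The further input the paper uses is differential, not algebraic: one covariantly differentiates the soliton equation, feeds the result through the commutation formulas for $\pounds_V\nabla$ and $\pounds_V R$ (Lemma~\ref{lem3.2}), and shows that under the $\eta$-Einstein hypothesis $(\pounds_V R)_{X,Y}\,\xi=0$. Contracting this and comparing with the Lie derivative of $\Ric(\cdot,\xi)=-2\,n\beta^2\eta$ produces the factored identity $\{r+2\,n(2\,n+1)\beta^2\}\,\pounds_V\eta=0$; the alternative $\pounds_V\eta=0$ is then ruled out by \eqref{3.7}, which in that case forces $\Ric^\sharp=-2\,n\beta^2\,{\rm id}$ and hence the critical value of $r$ anyway. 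None of this machinery (nor any substitute for it, such as a divergence/second-Bianchi argument applied to the differentiated soliton equation) appears in your proposal, so as written the conclusion does not follow.
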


\begin{proof}
Taking the covariant derivative of \eqref{E-42-proc} in the $Y$-direction and using \eqref{2.3b} and \eqref{2.3c} yields
\begin{align}\label{E-Th2-a}
 (\nabla_Y\Ric^\sharp)X = \frac{Y({r})}{2\,n}\{X-\eta(X)\xi\}
 -\big\{\frac{r}{2\,n}+(2\,n+1)\beta^2\big\}\beta\{g(X,Y)\xi -2\,\eta(X)\eta(Y)\xi +\eta(X)Y\}.
\end{align}
Contracting \eqref{E-Th2-a} over $Y$, we get, see also \cite{rov-126},
\begin{align}\label{E-Th2-d}
 (n-1)X({r}) = -\{\xi({r}) + 2\,n\beta({r}+2\,n(2\,n+1)\beta^2)\}\eta(X) .
\end{align}
Using \eqref{3.1A-patra} in \eqref{E-Th2-d}, we conclude that
\begin{align}\label{E-Th2-e}
 X({r}) = \xi({r})\eta(X) ;
\end{align}
hence, ${r}$ is constant along the leaves of ${\cal D}$.
Applying \eqref{E-42-proc}, \eqref{E-Th2-a} and \eqref{E-Th2-e} in \eqref{3.9}, gives
\begin{align}\label{E-Th2-b}
 (\pounds_V R)_{X,Y}\,\xi = 0 .
\end{align}
Contracting \eqref{E-Th2-b} over $X$ and using the assumptions, we get, see also \cite{rov-126},
\begin{align}\label{E-Th2-f}
 (\pounds_V \Ric)(Y,\xi) = \tr\{X \to (\pounds_V R)_{X,Y}\,\xi\} = 0.
\end{align}
Taking the Lie derivative of $\Ric(Y,\xi) = -2\,n\beta^2\eta(Y)$, see \eqref{2.5-patra}, along $V$ yields
\begin{align}\label{E-Th2-g}
 (\pounds_V \Ric)(Y,\xi) + \Ric(Y,\pounds_V\xi) = -2\,n\,\beta^2(\pounds_V\,\eta)(Y) .
\end{align}
Using \eqref{E-Th2-f} in the preceding equation \eqref{E-Th2-g}, we obtain
\begin{align}\label{E-Th2-fg}
 \Ric(Y,\pounds_V\xi) = -2\,n\,\beta^2(\pounds_V\,\eta)(Y) .
\end{align}
From \eqref{E-Th2-fg}, using \eqref{E-42-proc}, we find
\begin{align}\label{E-Th2-h}
 \big\{\frac{r}{2\,n} + \beta^2\big\} g(Y,\pounds_V\,\xi) - \big\{\frac{r}{2\,n} + (2\,n+1)\beta^2\big\}\eta(Y)\eta(\pounds_V\,\xi)
 =
 %2\,\beta^4\eta(Y)\tr\widetilde Q
 - 2\,n\beta^2(\pounds_V\,\eta)(Y) .
\end{align}
From \eqref{E-Th2-h}, using \eqref{E-3.19X}, we obtain the following equality (with two factors):
\begin{align}\label{E-Th2-i}
 \big\{{r} + 2\,n(2\,n+1)\beta^2\big\}\pounds_V\,\eta = 0 .
\end{align}

\textbf{Case 1}. Let
${r}=-2\,n(2\,n+1)\beta^2$ is valid on $M$, then by \eqref{E-42-proc}, $g$ is an Einstein metric.

\textbf{Case 2}. Let ${r}\ne-2\,n(2\,n+1)\beta^2$ on an open set ${\cal U}\subset M$.
Then \eqref{E-Th2-i} yields $\pounds_V\,\eta = 0$, i.e. $V$~is a strictly contact vector field on ${\cal U}$.
Let us show that this leads to a contradiction.
By $\pounds_V\,\eta = 0$ and \eqref{E-3.19X}, we get $\pounds_V\,\xi=0$.
%%%%%%%
Thus, from \eqref{E-ict-3} we find $(\pounds_V\nabla)(X,\xi)=0$.
Applying the previous equality to \eqref{3.7} yields $ \Ric^\sharp QX = - 2\,n\beta^2 QX$.
Since ${\cal D}$ is invariant for the non-singular operator $Q$, we get
\begin{align*}%\label{E-ict-5}
 \Ric^\sharp X = - 2\,n\beta^2 X .
\end{align*}
Hence, ${r}=-2\,n(2\,n+1)\beta^2$ on ${\cal U}$ -- a contradiction.
\end{proof}

Recall the well-known formula, see \cite{ven-2022},
\begin{align}\label{E-ict-2}
 (\pounds_V\nabla)(X,Y) = \pounds_V\nabla_X\,Y - \nabla_X\pounds_V\,Y -\nabla_{[V,X]}\,Y  .
\end{align}

\begin{definition}\rm
A vector field $X$ on a weak almost contact manifold $M^{2n+1}({f},Q,\xi,\eta,g)$ is said to be a~\textit{contact vector field} if there is a smooth function $\sigma$ on $M$ such that
\begin{align}\label{E-ict-1}
 \pounds_X\,\eta = \sigma\,\eta .
\end{align}
Further, $X$ is called a \textit{strictly contact vector field} when $\sigma = 0$.
\end{definition}

We consider a weak KM as a $\ast$-$\eta$-RS (e.g., $\ast$-RS) with contact potential vector field $V$.

The following result generalizes Theorem~3.8 of \cite{ven-2022} (see also Theorem~3.3 and Corollary~3.4 of~\cite{DSB-2024}).

\begin{theorem}\label{T-002a}
Let a weak $\beta$-KM be a $\ast$-$\eta$-RS, whose potential vector field $V$ is a contact vector field.
Then $V$ is strictly contact and $(M, g)$ is an Einstein manifold of scalar curvature ${r}=-2\,n(2\,n{+}1)\beta^2$.
\end{theorem}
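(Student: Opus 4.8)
The plan is to show first that the contact function $\sigma$ in $\pounds_V\eta=\sigma\eta$ must vanish, and then to deduce the Einstein condition exactly as in the proof of the previous theorem. I would start from the defining equation \eqref{E-ict-1} and evaluate it on $\xi$: by Proposition~\ref{T-lambda0} we have $(\pounds_V\eta)(\xi)=0$, and since $\eta(\xi)=1$ this gives $\sigma=0$ immediately. Hence $V$ is strictly contact, and then \eqref{E-3.19X} yields $\pounds_V\xi=0$ as well (its $\mathfrak{X}_M$-component equation reads $(\pounds_V\eta)(X)=g(X,\pounds_V\xi)$, and the left side now vanishes identically).

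Next I would feed $\pounds_V\xi=0$ into the commutation formula \eqref{E-ict-2} (equivalently \eqref{E-ict-3}), noting that for the $\beta$-KM we have $\nabla_X\xi=\beta\{X-\eta(X)\xi\}$ from \eqref{2.3b}, so that
\begin{align*}
 (\pounds_V\nabla)(X,\xi)
 = \pounds_V(\nabla_X\xi) - \nabla_X(\pounds_V\xi) - \nabla_{[V,X]}\xi
 = \beta\,\pounds_V\{X-\eta(X)\xi\} - \beta\{[V,X] - \eta([V,X])\xi\}.
\end{align*}
Expanding $\pounds_V X = [V,X]$ and using $\pounds_V\xi=0$ together with $\pounds_V\eta=0$, the right-hand side collapses to $0$; thus $(\pounds_V\nabla)(X,\xi)=0$ for all $X$. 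Comparing this with the general identity \eqref{3.7}, namely $(\pounds_V\nabla)(X,\xi)=2\beta\,\Ric^\sharp QX+4n\beta^3 QX$, we obtain $\Ric^\sharp QX=-2n\beta^2 QX$. Since $Q$ is a nonsingular operator preserving the distribution ${\cal D}$ and fixes $\xi$ (with $\Ric^\sharp\xi=-2n\beta^2\xi$ already known from \eqref{2.5-patra}), we may cancel $Q$ and conclude $\Ric^\sharp X=-2n\beta^2 X$ for all $X$, i.e.\ $g$ is Einstein with $r=-2n(2n+1)\beta^2$.

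The only delicate point is the very first step: one must be sure that Proposition~\ref{T-lambda0} — whose conclusion $(\pounds_V\eta)(\xi)=\eta(\pounds_V\xi)=0$ we are using to force $\sigma=0$ — applies here with no extra hypotheses beyond "weak $\beta$-KM that is a $\ast$-$\eta$-RS," which it does. After that, everything is a direct substitution into already-established formulas (\eqref{E-3.19X}, \eqref{E-ict-2}, \eqref{3.7}, \eqref{2.5-patra}), so I expect no genuine obstacle; the main care required is the bookkeeping in expanding $(\pounds_V\nabla)(X,\xi)$ via \eqref{E-ict-2} and checking that the $\eta$-terms cancel using $\pounds_V\eta=0$. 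This also recovers, as the case $\mu=0$, the corresponding statement for a $\ast$-RS.
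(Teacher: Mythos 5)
Your proof is correct and follows essentially the same route as the paper's: the paper obtains $\sigma=0$ by computing $\pounds_V\,\xi=(\sigma-2(\lambda+\mu))\,\xi$ from the Lie derivative of $\eta(X)=g(X,\xi)$ together with \eqref{E-3.20}, \eqref{E-AB} and \eqref{E-3.21b}, whereas you evaluate $\pounds_V\,\eta=\sigma\,\eta$ at $\xi$ directly and then get $\pounds_V\,\xi=0$ from \eqref{E-3.19X} --- a harmless shortcut. The remainder (showing $(\pounds_V\nabla)(X,\xi)=0$ via \eqref{E-ict-2} and comparing with \eqref{3.7} to cancel the nonsingular $Q$) is identical to the paper's argument.
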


\begin{proof}
Taking the Lie derivative of $\eta(X)=g(X,\xi)$ along $V$ yields
\begin{align}\label{E-3.21L}
 (\pounds_V\,\eta)(X) = (\pounds_V\,g)(X,\xi) + g(X, \pounds_V\,\xi) .
\end{align}
Then, using \eqref{E-3.20} and \eqref{E-ict-1} in \eqref{E-3.21L}, we obtain
\[
 \pounds_V\,\xi = \big(\sigma- 2(\lambda+\mu)\big)\,\xi.
\]
Using this, \eqref{E-AB} and \eqref{E-3.21b}, we get $\sigma=\lambda+\mu=0$, hence $V$ is strictly contact.
Moreover,
\begin{align*}%\label{E-ict-4}
 \pounds_V\,\xi = 0.
\end{align*}
Also, \eqref{E-ict-1} yields  $\pounds_V\,\eta=(\lambda+\mu)\,\eta=0$.
Setting $Y=\xi$ in \eqref{E-ict-2} and using \eqref{2.3b}, we find
\begin{align}\label{E-ict-3}
 (\pounds_V\nabla)(X,\xi) = -\beta\{(\pounds_V\,\eta)(X)\xi + \eta(X)\pounds_V\,\xi\} = 0.
\end{align}
Then, applying \eqref{3.7} yields
\begin{align*}%\label{E-ict-5}
 \Ric^\sharp QX = - 2\,n\beta^2 QX .
\end{align*}
Since ${\cal D}$ is invariant for the non-singular operator $Q$, we get $\Ric^\sharp X = - 2\,n\beta^2 X$ for all $X\in{\cal D}$.
Hence $(M, g)$ is an Einstein manifold of scalar curvature ${r}=-2\,n(2\,n+1)\beta^2$.
\end{proof}

The following result generalizes Theorem 4.3 of \cite{ven-2022}.

\begin{theorem}\label{T-004}
Let a weak $\beta$-KM $M({f},Q,\xi,\eta,g)$ be a $\ast$-$\eta$-RS, whose potential vector field $V$ is collinear with $\xi$:
$V=\delta\,\xi$ for a smooth function $\delta\ne0$ on $M$. Then $\delta=const$ and $(M, g)$ is an
Einstein manifold of scalar curvature ${r}=-2\,n(2\,n+1)\beta^2$.
\end{theorem}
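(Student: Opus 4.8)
The plan is to substitute the ansatz $V=\delta\,\xi$ into the structural identities derived above and extract from them that $\delta$ is constant and that $g$ is Einstein. First I would compute $\pounds_V\,g$ and $\pounds_V\,\eta$ in terms of $\delta$ using the weak $\beta$-KM formulas \eqref{2.3b}--\eqref{2.3d}. Since $\pounds_{\delta\xi}\,g=\delta\,\pounds_\xi\,g+d\delta\otimes\eta+\eta\otimes d\delta$, equation \eqref{2.3d} gives
\begin{align*}
 (\pounds_V\,g)(X,Y)=2\,\delta\beta\{g(X,Y)-\eta(X)\eta(Y)\}+X(\delta)\eta(Y)+Y(\delta)\eta(X).
\end{align*}
Evaluating at $Y=\xi$ and comparing with \eqref{E-3.20} from Proposition~\ref{T-lambda0} (recall $\lambda+\mu=0$ there) yields $X(\delta)+\xi(\delta)\eta(X)=0$, whence first $X(\delta)=0$ for $X\in{\cal D}$ and then $\xi(\delta)=0$; so $\delta$ is constant. (Alternatively, $\eta(\pounds_V\,\xi)=0$ from \eqref{E-AB} together with $\pounds_V\,\xi=\delta\,\pounds_\xi\,\xi+(\text{terms})$ and $\nabla_\xi\xi=0$ gives $\xi(\delta)=0$ directly.)

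With $\delta=const$, the Lie derivative simplifies to $\pounds_V\,g=2\,\delta\beta\{g-\eta\otimes\eta\}$, i.e.\ $\pounds_V\,g=\delta\,\pounds_\xi\,g$, and $\pounds_V\,\eta=0$ (using \eqref{3.3B} and $[\,\xi,X\,]$ computations, or noting $\delta$ constant makes $V$ a constant multiple of $\xi$, which is strictly contact by \eqref{2.3c}). Then I would plug this into the $\ast$-$\eta$-RS equation in the form \eqref{E-sol-eta2}. Using $[\Ric^\sharp,Q]=0$ and the $\beta$-KM curvature data, the equation reads
\begin{align*}
 \delta\beta\{g(X,Y)-\eta(X)\eta(Y)\}+\Ric(X,QY)=\lambda\,g(X,Y)-(2n-1)\beta^2 g(X,QY)+(\mu-\beta^2)\eta(X)\eta(Y).
\end{align*}
Since $Q=\mathrm{id}+\widetilde Q$ and $Q$ restricted to ${\cal D}$ is nonsingular, this exhibits $\Ric^\sharp$ on ${\cal D}$ as a $g$-multiple of $Q$ plus lower-order terms; one then shows the $\widetilde Q$-part is forced to vanish and $\Ric^\sharp X=cX$ on ${\cal D}$ for a constant $c$. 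Combined with $\Ric^\sharp\xi=-2n\beta^2\xi$ from \eqref{2.5-patra}, I would identify $c=-2n\beta^2$: indeed, following the route used in the proof of Theorem~\ref{T-002a}, from $\pounds_V\,\eta=0$ and $\pounds_V\,\xi=0$ one gets $(\pounds_V\nabla)(X,\xi)=0$ by \eqref{E-ict-3}, and then \eqref{3.7} forces $\Ric^\sharp QX=-2n\beta^2 QX$, hence $\Ric^\sharp X=-2n\beta^2 X$ on ${\cal D}$ because $Q$ is invertible on ${\cal D}$. Together with \eqref{2.5-patra} this gives $\Ric=-2n\beta^2 g$ on all of $TM$, so $(M,g)$ is Einstein with ${r}=-2n(2n+1)\beta^2$.

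The main obstacle I anticipate is the very first step: showing $\delta$ is constant. The $\eta(X)$-component of \eqref{E-3.20} only controls $(\pounds_V g)(X,\xi)$, and one must be careful that the cross terms $d\delta\otimes\eta$ in $\pounds_{\delta\xi}\,g$ are handled correctly so that both $X(\delta)=0$ on ${\cal D}$ and $\xi(\delta)=0$ emerge, rather than just one of them. The second potential subtlety is extracting $c=-2n\beta^2$ cleanly: one wants to avoid circularity, so invoking \eqref{3.7}/\eqref{E-ict-3} (which hold once $\pounds_V\,\xi=0$, itself a consequence of $\delta=const$ and $\nabla_\xi\xi=0$) is the cleanest path, mirroring Theorem~\ref{T-002a}. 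The rest is routine bookkeeping with the weak-structure tensors $Q$ and $\widetilde Q$.
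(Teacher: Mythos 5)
Your proposal is correct, and its first half (constancy of $\delta$) is essentially the paper's argument: you expand $\pounds_{\delta\xi}\,g=\delta\,\pounds_\xi\,g+d\delta\otimes\eta+\eta\otimes d\delta$, evaluate at $Y=\xi$, and use $\lambda+\mu=0$ from Proposition~\ref{T-lambda0} to get $X(\delta)+\xi(\delta)\,\eta(X)=0$, exactly the paper's \eqref{E-T4-3}. Where you genuinely diverge is the second half. The paper stays inside the Ricci-tensor identities: it rewrites the soliton equation as an explicit formula \eqref{E-T4-4b} for $\Ric(X,QY)$ (noting in passing that the metric is $\ast$-$\eta$-Einstein), covariantly differentiates it using \eqref{E-nS-10b}, sets $Z=\xi$ to get $(\nabla_\xi\Ric)(X,QY)=0$, and then invokes \eqref{E-L-02b} plus the nonsingularity of $Q$ to conclude $\Ric^\sharp=-2n\beta^2\,\mathrm{id}$. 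You instead observe that once $\delta$ is constant, $\pounds_V\xi=0$ and $\pounds_V\eta=0$, so $V$ is strictly contact and the argument of Theorem~\ref{T-002a} applies verbatim: \eqref{E-ict-3} gives $(\pounds_V\nabla)(X,\xi)=0$, and \eqref{3.7} forces $\Ric^\sharp QX=-2n\beta^2 QX$. This is shorter and arguably cleaner (one could even just cite Theorem~\ref{T-002a} at that point), at the cost of leaning on Lemma~\ref{lem3.2}'s formula \eqref{3.7} and of not recording the intermediate $\ast$-$\eta$-Einstein structure that the paper's route exhibits. One small caveat: your intermediate remark that the displayed soliton equation alone ``forces the $\widetilde Q$-part to vanish'' and yields $\Ric^\sharp X=cX$ is not an argument --- that equation only determines $\Ric^\sharp Q$ as a combination of $\mathrm{id}$ and $Q$ --- but this is harmless since the concrete derivation you actually commit to (via \eqref{E-ict-3} and \eqref{3.7}) is complete and does not rely on it.
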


\begin{proof}
From \eqref{E-sol-eta2}, using \eqref{3.3C} and Proposition~\ref{T-lambda0}, we obtain
\begin{align}\label{E-T4-1}
\notag
 & g(\nabla_X V, Y) + g(X, \nabla_Y V) = - 2\Ric(X,QY) \\
 & - 2\,\mu g(X,Y) - 2(2n-1)\beta^2 g(X,QY) + 2(\mu-\beta^2)\eta(X)\,\eta(Y) .
\end{align}
Using $V=\delta\,\xi$ in \eqref{E-T4-1}, we obtain
\begin{align}\label{E-T4-2}
\notag
 & (d\delta)(X)\,\eta(Y) + (d\delta)(Y)\,\eta(X)
 + 2\,\delta\,\beta\{g(X,Y) - \eta(X)\,\eta(Y)\}
 = - 2\Ric(X,QY) \\
 & - 2\,\mu g(X,Y) - 2(2n-1)\beta^2 g(X,QY) + 2(\mu-\beta^2)\eta(X)\,\eta(Y) .
\end{align}
Substituting $Y=\xi$ in \eqref{E-T4-2} and using \eqref{2.3b} and \eqref{2.5-patra}, gives
\begin{align}\label{E-T4-3}
 (d\delta)(X) + (d\delta)(\xi)\,\eta(X) = 0 .
\end{align}
Therefore, $\delta$ is invariant along the distribution
$\ker\eta$, that is $(d\delta)(X)=0\ (X\perp\xi)$.
Changing $X$ to $\xi$ in \eqref{E-T4-3} yields $d\delta(\xi)=0$.
By the above, $\delta=const$. For $X,Y\perp\xi$ we obtain
\begin{align}\label{E-T4-4}
\notag
 & \delta \pounds_\xi\,g(X,Y) = - 2\Ric(X,QY) \\
 & - 2\,\mu g(X,Y) - 2(2n-1)\beta^2 g(X,QY) + 2(\mu-\beta^2)\eta(X)\,\eta(Y) .
\end{align}
Taking \eqref{2.3d} in \eqref{E-T4-4} and using \eqref{2.3d} gives
\begin{align*}
\notag
 & \Ric(X,QY) = -\delta\,\beta\{g(X,Y)-\eta(X)\,\eta(Y)\} \\
 & - \mu g(X,Y) - (2n-1)\beta^2 g(X,QY) + (\mu-\beta^2)\eta(X)\,\eta(Y) ,
\end{align*}
or, using \eqref{E-ast-Ric},
\begin{align*}%\label{E-T4-5}
 \Ric^\ast = -(\mu+\delta\,\beta) g + (\mu + \delta\beta)\,\eta\otimes\eta,
\end{align*}
which means that our manifold is $\ast$-$\eta$-Einstein.
From the above equation with $\Ric(X,QY)$, we obtain
\begin{align}\label{E-T4-4b}
 \Ric(X,QY) = -(\delta\,\beta+\mu) g(X,Y) -(2\,n-1)\beta^2 g(X,QY) +(\delta\,\beta+\mu-\beta^2)\eta(X)\,\eta(Y) .
\end{align}
Taking the covariant derivative of \eqref{E-T4-4b} in $Z$ and using \eqref{E-nS-10b} yields
\begin{align}\label{E-T4-4c}
\notag
 & (\nabla_Z\Ric)(X,QY) = \beta\,\eta(Y)\big\{\Ric(X,\widetilde Q Z) + (2n-1)\beta^2 g(X,\widetilde Q Z)\big\}
 - \beta^3\eta(X) g(Y,\widetilde Q Z)\\
 & + (\delta\,\beta+\mu-\beta^2)\{ g(Z,X)\eta(Y) + g(Z,Y)\eta(X) - 2\,\eta(X)\eta(Y)\eta(Z)\} .
\end{align}
Using $Z=\xi$ in \eqref{E-T4-4c}, we obtain $(\nabla_\xi\Ric)(X,QY)=0$.
Applying to this \eqref{E-L-02b} and taking into account \eqref{2.5-patra} and non-singularity of $Q$, we obtain $\Ric^\sharp X=-2\,n\beta^2 X$.
Therefore, $g$ is an Einstein metric.
\end{proof}

If $v,\lambda,\mu\in C^\infty(M)$ and $V=\nabla v$, then \eqref{E-sol-ast-eta} gives a \textit{gradient almost $\ast$-$\eta$-RS}:
\begin{align}\label{E-sol-ast-eta-grad}
 {\rm Hess}_{\,v} +\Ric^\ast=\lambda\,g +\mu\,\eta\otimes\eta ,
\end{align}
see, for example, \cite{DeyRoy-2020,DSB-2024,ven-2022}.
Recall that the Hessian, ${\rm Hess}_{\,v}$, is a symmetric (0,2)-tensor defined by
\[
 {\rm Hess}_{\,v}(X,Y) = g(\nabla_X\nabla v, Y)\quad (X,Y\in\mathfrak{X}_M).
\]
For a gradient almost $\ast$-$\eta$-RS, \eqref{E-sol-eta2} reduces to
\begin{align}\label{E-sol-eta2-grad}
 \nabla_X\nabla v + \Ric^\sharp QX = \lambda\,X - (2n-1)\beta^2 QX + (\mu-\beta^2)\,\eta(X)\,\xi .
\end{align}

\begin{theorem}\label{T-005}
Let a weak $\beta$-KM $M({f},Q,\xi,\eta,g)$ with $\beta\ne0$ be a gradient almost $\ast$-$\eta$-RS \eqref{E-sol-ast-eta-grad}.
Then $\nabla v=\delta\,\xi$ for some $\delta\in\mathbb{R}$ and
$(M, g)$ is an Einstein manifold of scalar curvature ${r}=-2\,n(2\,n+1)\beta^2$.
\end{theorem}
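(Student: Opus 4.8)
The plan is to start from the gradient almost $\ast$-$\eta$-RS equation in the form \eqref{E-sol-eta2-grad} and exploit the curvature identities \eqref{2.4}--\eqref{E-L-02a} available on a weak $\beta$-KM with $\beta=const$. First I would compute the curvature $R_{X,Y}\nabla v$ by differentiating \eqref{E-sol-eta2-grad}: taking the covariant derivative of $\nabla_X\nabla v = \lambda X - \Ric^\sharp QX - (2n-1)\beta^2 QX + (\mu-\beta^2)\eta(X)\xi$ along $Y$, antisymmetrizing in $X,Y$, and using the formulas $(\nabla_Z Q)Y$ from \eqref{E-nS-10b}, $(\nabla_Z\Ric^\sharp)$ relations \eqref{E-L-02b}, \eqref{E-L-02a}, $\nabla_X\xi$ from \eqref{2.3b} and $(\nabla_X\eta)(Y)$ from \eqref{2.3c}. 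This yields an explicit expression for $R_{X,Y}\nabla v$ in terms of $\Ric^\sharp$, $Q$, $\widetilde Q$, $\eta$ and $g$.

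Next I would contract that expression with $\xi$ (i.e. set $Z=\xi$ or take inner product with $\xi$) and compare with the known value $R_{X,Y}\xi = \beta^2(\eta(X)Y-\eta(Y)X)$ from \eqref{2.4}, together with $g(R_{X,Y}\nabla v,\xi) = -g(R_{X,Y}\xi,\nabla v)$. This should force strong pointwise restrictions: I expect to obtain that the ${\cal D}$-component of $\nabla v$ vanishes, i.e. $\nabla v = \delta\,\xi$ where $\delta = \xi(v)$, by an argument parallel to \eqref{E-T4-3} in the proof of Theorem~\ref{T-004}. Once $\nabla v = \delta\,\xi$ is established, I would substitute $V=\nabla v = \delta\,\xi$ back and essentially run the argument of Theorem~\ref{T-004}: show $\delta=const$ (using $d\delta(X)=0$ for $X\perp\xi$ and then $d\delta(\xi)=0$), deduce the manifold is $\ast$-$\eta$-Einstein, differentiate the resulting Ricci formula \eqref{E-T4-4b} along $\xi$ using \eqref{E-L-02b} and \eqref{2.5-patra}, and conclude $\Ric^\sharp X = -2n\beta^2 X$, hence $g$ is Einstein with ${r} = -2n(2n+1)\beta^2$.

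An alternative, possibly cleaner route avoids fully computing $R_{X,Y}\nabla v$: take the trace of \eqref{E-sol-eta2-grad} to get $\Delta v + \tr(Q\Ric^\sharp) = (2n+1)\lambda - (2n-1)\beta^2\tr Q + (\mu-\beta^2)$, i.e. relate $\Delta v$ to ${r}^\ast$ via \eqref{E-ast-scal}; also evaluate \eqref{E-sol-eta2-grad} at $X=\xi$ using $\Ric^\sharp\xi = -2n\beta^2\xi$, $Q\xi=\xi$ to get $\nabla_\xi\nabla v = (\lambda+\mu-2n\beta^2-(2n-1)\beta^2+2n\beta^2)\xi$, and compare with the Hessian identity $\mathrm{Hess}_v(X,\xi) = X(\xi v) - (\nabla_X\xi)(v) = X(\xi v) - \beta(X v - \eta(X)\xi v)$. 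Matching these forces $X(\xi v) = \beta\,X(v)$ plus a multiple of $\eta(X)$, from which $\nabla v$ being collinear with $\xi$ follows.

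The main obstacle is the bookkeeping in computing $R_{X,Y}\nabla v$: the non-triviality of $Q$ means the terms $(\nabla_Z Q)Y$ and $\Ric^\sharp\widetilde Q$ proliferate, and one must carefully use $[\,Q,\Ric^\sharp\,]=0$ (noted after \eqref{E-sol-eta2}) and $[\widetilde Q,f]=0$ to collapse them; the key cancellation — that all the $\widetilde Q$-dependent terms either vanish on contraction with $\xi$ (since $\widetilde Q\xi=0$) or combine into the $\beta^2$-multiple predicted by \eqref{2.4} — is where the argument could stall if a sign or coefficient is mishandled. Once past that, the remainder is a faithful adaptation of the Theorem~\ref{T-004} argument with $\delta$ replaced by $\xi(v)$.
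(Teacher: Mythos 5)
Your main route is exactly the paper's proof: differentiate \eqref{E-sol-eta2-grad}, antisymmetrize to get $R_{X,Y}\nabla v$, take the inner product with $\xi$ (where, as you anticipate, the $\widetilde Q$- and $(\nabla Q)$-terms drop out via \eqref{E-nS-10b} and Proposition~\ref{T-lambda0}), compare with $-g(R_{X,Y}\xi,\nabla v)$ from \eqref{2.4} to conclude $\eta(X)Y(v)=\eta(Y)X(v)$, hence $\nabla v=\delta\,\xi$, and then invoke Theorem~\ref{T-004}. The proposal is correct and follows essentially the same approach as the paper.
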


\begin{proof}
Taking covariant derivative of \eqref{E-sol-eta2-grad} along a vector field $Y$ and using \eqref{2.3b}, yields
\begin{align}\label{E-sol-eta2-grad2}
\notag
 & \nabla_Y\nabla_X\nabla v + (\nabla_Y Q\Ric^\sharp)X = Y(\lambda)X - (2n-1)\beta^2 (\nabla_Y Q)X + Y(\mu)\,\eta(X)\,\xi \\
 & + (\mu-\beta^2)\beta\big\{ \big(g(X,Y) -\eta(X)\,\eta(Y)\big)\,\xi + \eta(X)\big(Y -\eta(Y)\,\xi\big)\big\} .
\end{align}
Applying \eqref{E-sol-eta2-grad2} in the expression of Riemannian curvature tensor, we obtain
\begin{align}\label{E-sol-eta2-R}
\notag
 R_{X,Y}\nabla v & = (\nabla_Y Q\Ric^\sharp)X - (\nabla_X Q\Ric^\sharp)Y
 + X(\lambda)Y - Y(\lambda)X + \big(X(\mu)\eta(Y) - Y(\mu)\eta(X)\big)\,\xi \\
 & + (\mu-\beta^2)\beta\{\eta(Y)X - \eta(X)Y\} + (2n-1)\beta^2 \{ (\nabla_Y Q)X - (\nabla_X Q)Y \} .
\end{align}
Moreover, an inner product of \eqref{E-sol-eta2-R} with $\xi$ and use of \eqref{E-nS-10b} and Proposition~\ref{T-lambda0}
%and the property $\tr\widetilde Q=const$
yields
\begin{align}\label{E-sol-eta2-R-xi}
\notag
 & g(R_{X,Y}\nabla v, \xi) = g((\nabla_Y Q\Ric^\sharp)\xi, X) - g((\nabla_X Q\Ric^\sharp)\xi, Y) + X(\lambda)\eta(Y) - Y(\lambda)\eta(X) \\
\notag
 & + X(\mu)\eta(Y) - Y(\mu)\eta(X) + (2n-1)\beta^2 \{ g( (\nabla_Y Q)X, \xi) - g((\nabla_X Q)Y, \xi)\}  \\
%%%%
& = X(\lambda+\mu)\eta(Y) - Y(\lambda+\mu)\eta(X) = 0 .
\end{align}
On the other hand, using \eqref{2.4} we obtain
\begin{align}\label{E-sol-eta2-R-xi2}
 g(R_{X,Y}\nabla v, \xi) = - g(R_{X,Y}\xi, \nabla v) = -\beta^2\{\eta(X)Y(v) - \eta(Y)X(v)\} .
\end{align}
Comparing \eqref{E-sol-eta2-R-xi} and \eqref{E-sol-eta2-R-xi2}
and using the assumption $\beta\ne0$, we conclude that
\begin{align}\label{E-sol-eta2-R-xi3}
 \eta(X)Y(v) - \eta(Y)X(v) = 0 .
\end{align}
Taking $Y=\xi$ in \eqref{E-sol-eta2-R-xi3}, we obtain
$X(v)=\eta(X)\,\xi(v)$. Thus, $X(v)=0\ (X\perp\xi)$, i.e., the soliton vector field $V=\nabla v$ is collinear with $\xi$.
Applying Theorem~\ref{T-004} completes the proof.
\end{proof}

\section{Conclusion}

We introduced the $\ast$-Ricci tensor for weak metric structures (defined by the author and R.\,Wolak), which are more general than the almost contact metric structure. We extended the theory of $\ast$-$\eta$-RS for weak $\beta$-KM -- a distinguished class of manifolds with a weak contact metric structure,
defined as a generalization of K.~Kenmotsu's concept.
Our results generalize the study of other authors and present conditions under which a weak $\beta$-KM equipped with a $\ast$-$\eta$-RS carries an Einstein metric. A~future task is to study the interaction of weak $\beta$-KM (as well as other weak almost contact metric manifolds)
with almost $\ast$-$\eta$-RS, that is, when $\lambda,\mu$ in \eqref{E-sol-ast-eta0} are smooth functions on the~manifold.

\baselineskip=11.6pt

\end{document}